\newtheorem{thm}{Theorem}[section]
\newtheorem{corollary}{Corollary}
\newtheorem{theorem}{Theorem}
\newtheorem{cor}[thm]{Corollary}
\newtheorem{proposition}{Proposition}
\newtheorem{example}{Example}
\newcommand{\remark}[1]{\noin{{\bf Remark.}} }
\def\K#1#2{\displaystyle{\mathop K\limits_{#1}^{#2}}}
\newdimen\plusheight
\def\+{\;\lower\plusheight\hbox{$+$}\;}
\newdimen\minusheight
\def\-{\;\lower\minusheight\hbox{$-$}\;}
\newdimen\cdotsheight
\def\cds{\lower\cdotsheight\hbox{$\cdots$}}
\begin{document}
\title[Continued Fractions with Many Limits]{Continued Fractions and Generalizations
with Many Limits: A Survey.}
\author{Douglas Bowman
}
\address{Northern Illinois University,
   Mathematical Sciences,
   DeKalb, IL 60115-2888
} \email{ bowman@math.niu.edu}
\author{James Mc Laughlin
}
\address{Mathematics Department,
 Anderson Hall,
West Chester University, West Chester, PA 19383 }
\email{jmclaughl@wcupa.edu }
\thanks{The first author's research was partially supported by NSF grant
DMS-0300126.}

\date{July 31, 2006}

\begin{abstract}
There are infinite processes (matrix products, continued
fractions, $(r,s)$-matrix continued fractions,  recurrence
sequences) which, under certain circumstances, do not converge but
instead diverge in a very predictable way.

We give a survey of results in this area, focusing on recent
results of the authors.
\end{abstract}

\maketitle

\section{Introduction}

Consider the following recurrence:

$$x_{n+1}=\frac {4}{3} -\frac {1}{x_n}.$$
Taking $1/\infty$ to be $0$ and vice versa, then regardless of the initial (real) value of this sequence,
it is an interesting fact that the sequence is dense in $\mathbb{R}$. The proof is illuminating.

Take $x_0=4/3$ and view $x_n$ as $n$'th approximant of the
continued fraction:
\begin{equation}\label{32}
4/3-\frac{1}{4/3}\-\frac{1}{4/3}\-\frac{1}{4/3}\-\frac{1}{4/3}\-
\cds.
\end{equation}
Then, from the standard theorem on the recurrence for convergents of
a continued fraction, the $n$'th numerator and denominator convergents of this
continued fraction, $A_n$ and $B_n$ respectively, must both satisfy the linear
recurrence relation
\[
Y_n=\frac {4}{3} Y_{n-1}-Y_{n-2},
\]
but with different initial conditions.

Now, the characteristic roots of this equation are
$\alpha=2/3+i\sqrt{5}/3$, and $\beta=2/3-i \sqrt{5}/3$. Thus from
the usual formula for solving linear recurrences, the exact
formula for $x_n$ is
\[
x_n=\frac{A_n}{B_n}=\frac{a\alpha^n+b\beta^n}{c\alpha^n+d\beta^n}=\frac{a\lambda^n+b}{c\lambda^n+d},
\]
where $a$, $b$, $c$, and $d$ are some complex constants and $\lambda=\alpha/\beta$.
Notice that $\lambda$ is a number on
the unit circle and is not a root of unity, so that $\lambda^n$ is dense on the unit circle. The
conclusion follows by noting that the linear fractional transformation
\[
z\mapsto\frac{az+b}{cz+d}
\]
must take the unit circle to  $\mathbb{R}$, since the values of the sequence $x_n$ are real.

After seeing this argument, one is tempted to write down the amusing
identity
\[
\mathbb{R}=4/3-\frac{1}{4/3}\-\frac{1}{4/3}\-\frac{1}{4/3}\-\frac{1}{4/3}\-
\cds.
\]
This identity is true so long as one interprets the value of the continued fraction
to be the set of limits of subsequences of its sequence of approximants.

Another motivating example of our work is the following theorem, one of the
oldest in the analytic theory of continued fractions \cite{LW92}:

\begin{thm} (Stern-Stolz)
\label{Stern-Stolz1}
Let the sequence $\{b_{n}\}$ satisfy $\sum |b_{n}| <\infty$. Then
\[
b_{0}+K_{n=1}^{\infty}\frac{1}{b_{n}}
\]
diverges. In fact, for $p=0,1$,
\begin{align*}
&\lim_{n \to \infty}P_{2n+p}=A_{p} \not = \infty,& &\lim_{n \to \infty}Q_{2n+p}=B_{p} \not = \infty,&
\end{align*}
and
\[
A_{1}B_{0}-A_{0}B_{1} = 1.
\]
\end{thm}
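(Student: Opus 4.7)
The plan is to prove the Stern--Stolz theorem directly from the three-term recurrence for continued-fraction convergents, in three stages: (i) show that the sequences $\{P_n\}$ and $\{Q_n\}$ are bounded in modulus; (ii) show that the even- and odd-indexed subsequences of each are Cauchy by telescoping; (iii) pass the standard determinant identity to the limit to obtain $A_1B_0 - A_0B_1 = 1$.

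For (i), recall that $P_n = b_n P_{n-1} + P_{n-2}$ and $Q_n = b_n Q_{n-1} + Q_{n-2}$, with the usual initial conditions $P_{-1}=1,\ P_0=b_0,\ Q_{-1}=0,\ Q_0=1$. The triangle inequality gives
\[
|P_n| + |P_{n-1}| \;\leq\; |b_n|\,|P_{n-1}| + |P_{n-2}| + |P_{n-1}| \;\leq\; (1+|b_n|)\bigl(|P_{n-1}| + |P_{n-2}|\bigr),
\]
and iterating yields $|P_n|+|P_{n-1}| \leq (|P_0|+|P_{-1}|)\prod_{k=1}^{n}(1+|b_k|)$. Because $\sum|b_k|<\infty$, the infinite product $\prod_{k=1}^{\infty}(1+|b_k|)$ converges (via $\log(1+|b_k|)\leq|b_k|$), so the $|P_n|$ are uniformly bounded by some constant $M$; the same argument handles $|Q_n|$.

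For (ii), observe the crucial identity obtained from the recurrence:
\[
P_{n+2} - P_n = b_{n+2}\,P_{n+1},\qquad Q_{n+2} - Q_n = b_{n+2}\,Q_{n+1}.
\]
Combined with the bound from (i), this gives $|P_{n+2}-P_n|\leq M|b_{n+2}|$ and similarly for $Q$. Since $\sum|b_n|<\infty$, the telescoping partial sums $P_{2n+p} = P_{p} + \sum_{k=0}^{n-1}(P_{2k+p+2}-P_{2k+p})$ converge absolutely for each $p\in\{0,1\}$, so the limits $A_p := \lim_n P_{2n+p}$ and $B_p := \lim_n Q_{2n+p}$ exist and are finite. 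Finally, for (iii), the standard determinant identity for convergents (proved by induction from the recurrence, noting that the transfer matrix $\bigl(\begin{smallmatrix}b_n & 1\\ 1 & 0\end{smallmatrix}\bigr)$ has determinant $-1$) takes the form $P_{n}Q_{n-1} - P_{n-1}Q_{n} = \pm 1$ with sign depending on parity; specializing to $n=2m+1$ and letting $m\to\infty$ yields $A_1B_0 - A_0B_1 = 1$.

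The only real obstacle is the a priori boundedness in step (i); once the product estimate $\prod(1+|b_k|)<\infty$ is in hand, every remaining step is a routine consequence of the recurrence. Note in particular that the argument makes no use of positivity of the $b_n$: the convergence of $\sum|b_n|$ alone is enough, which is precisely why the conclusion is divergence (the two limits $A_0/B_0$ and $A_1/B_1$ are generally distinct) rather than convergence of the continued fraction itself.
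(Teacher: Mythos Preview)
Your proof is correct and is essentially the classical direct argument for Stern--Stolz. The paper, however, does not prove the theorem this way: it derives Stern--Stolz as the special case $\alpha=1$, $\beta=-1$, $q_n=0$, $p_n=b_n$ of its general Theorem~\ref{T1} (equivalently of Corollary~\ref{c1}), which in turn rests on the matrix-product Theorem~\ref{DM}. In the paper's framework one observes that the continued fraction $K_{n\ge1}\,1/b_n$ has the form $K_{n\ge1}\,(-\alpha\beta+q_n)/(\alpha+\beta+p_n)$ with $\alpha+\beta=0$ and $-\alpha\beta=1$, so $\lambda=\alpha/\beta=-1$ has order $m=2$, and the general machinery immediately yields the two limits and the determinant identity \eqref{detid} (which collapses to $1$ since $q_n=0$). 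What your elementary route buys is self-containment: the product bound $\prod(1+|b_k|)<\infty$ and the telescoping identity $P_{n+2}-P_n=b_{n+2}P_{n+1}$ require nothing beyond the three-term recurrence. What the paper's route buys is context: the same argument, with no extra work, produces the entire infinite family of Stern--Stolz type theorems in Corollary~\ref{cor1} and Example~\ref{SSseq}, and explains the ``two limits'' phenomenon as the $m=2$ instance of a rank-$m$ divergence governed by the order of $\alpha/\beta$ in $\mathbb{T}$. One small sharpening of your final sentence: the determinant relation $A_1B_0-A_0B_1=1$ in fact forces $A_0/B_0\neq A_1/B_1$ in $\widehat{\mathbb{C}}$ always (not just ``generally''), since equality would make the left side vanish; this is what actually proves divergence.
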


The Stern-Stolz theorem gives a general class of continued fractions each
of which tend
to two different limits, respectively $A_0/B_0$, and $A_1/B_1$. Here and
throughout we assume the limits for continued fractions are in $\widehat{\mathbb{C}}$.
This makes sense because continued fractions can be viewed as the composition
of linear fractional transformations and such functions have $\widehat{\mathbb{C}}$
as their natural domain and codomain.

Before leaving the Stern-Stolz theorem, we wish to remark that although the theorem is
usually termed a ``divergence theorem'', this terminology is a bit misleading; the theorem
actually shows that although the continued fractions of this form diverge, they do so by tending
to two limits in a precisely controlled way. In this paper we study extensions of this
phenomenon and investigate just how far one can go in this direction. Thus, although throughout
this paper we refer to certain of our results as ``divergence'' theorems, most of them
actually give explicit results about convergent subsequences.

A special case of the Stern-Stolz theorem gives a result on
the famous Rogers-Ramanujan continued
fraction:

\begin{equation}\label{RR}
 1+
\frac{q}{1}
\+
\frac{q^{2}}{1}
\+
\frac{q^{3}}{1}
 \+
\frac{q^{4}}{1}
\cds,
\end{equation}

The Stern-Stolz theorem gives that for $|q|>1$ the even and odd
approximants of this continued fraction tend
to two limiting functions. To see this, observe that
by the standard equivalence transformation for continued fractions, (\ref{RR})
is equal to
 \[
 1+
\frac{1}{1/q}
\+
\frac{1}{1/q}
\+
\frac{1}{1/q^{2}}
 \+
\frac{1}{1/q^{2}}
\cds
\+
\frac{1}{1/q^{n}}
 \+
\frac{1}{1/q^{n}}
\cds
.
\]

The Stern-Stolz theorem, however does not apply to the
following continued fraction given by Ramanujan:
\begin{equation}\label{R3}
\frac{-1}{1+q}
\+
\frac{-1}{1+q^2}
\+
\frac{-1}{1+q^3}
\+
\cds .
\end{equation}

Recently in  \cite{ABSYZ02} Andrews, Berndt, {\it{et al.}}  proved a claim
made
by Ramanujan in his lost notebook (\cite{S88}, p.45) about (\ref{R3}). To
describe Ramanujan's claim, we first need some notation.
 Throughout take $q\in\mathbb{C}$
with $|q|<1$. The following standard notation for $q$-products
will also be employed:
\begin{align*}
&(a)_{0}:=(a;q)_{0}:=1,& &
(a)_{n}:=(a;q)_{n}:=\prod_{k=0}^{n-1}(1-a\,q^{k}),& & \text{ if }
n \geq 1,&
\end{align*}
and
\begin{align*}
&(a;q)_{\infty}:=\prod_{k=0}^{\infty}(1-a\,q^{k}),& & |q|<1.&
\end{align*}
Set $\omega = e^{2 \pi i/3}$. Ramanujan's claim was that, for
$|q|<1$,
{\allowdisplaybreaks
\small{
\begin{equation}\label{3lim1}
\lim_{n \to \infty}
 \left (
\frac{1}{1}
 \-
\frac{1}{1+q}
\-
\frac{1}{1+q^2}
\-
\cds
\-
\frac{1}{1+q^n +a}
\right )
=
-\omega^{2}
\left (
\frac{\Omega - \omega^{n+1}}{\Omega - \omega^{n-1}}
\right ).
\frac{(q^{2};q^{3})_{\infty}}{(q;q^{3})_{\infty}},
\end{equation}
}
}
where
{\allowdisplaybreaks
\begin{equation*}
\Omega :=\frac{1-a\omega^{2}}{1-a \omega}
\frac{(\omega^{2}q,q)_{\infty}}{(\omega q,q)_{\infty}}.
\end{equation*}
}
Ramanujan's notation is confusing, but what his claim means is
that the limit exists as $n \to \infty$ in each of the three
congruence classes modulo 3, and that the limit is given by the
expression on the right side of (\ref{3lim1}). Also, the appearance
of the variable $a$ in this formula is a bit of a red herring; from elementary
properties of continued fractions, one can derive the result for
general $a$ from the $a=0$ case.

Now \eqref{32} is different from the other examples in that it has subsequences
of approximants tending to infinitely many limits. Nevertheless,
all of the examples above, including \eqref{32}, are
 special cases of a general result on continued fractions (Theorem \ref{T1} below). To deal with
both of these situations we introduce the notion of the {\it limit set}
of a sequence.

The {\it limit set} of the sequence is defined
to be the set of all limits of convergent
subsequences. Limit sets should not be confused with
sets of limit points. Thus, for example, the sequence
$\{1,1,1,\dots\}$ has limit set $\{1\}$ although the set of limit (accumulation)
points of the set of values of the sequence is empty.  Limit sets need to be
introduced so that sequences with constant subsequences will have the values
of these subsequences included among the possible limits. Certain periodic
continued fractions have this property. To avoid
confusion we designate the limit set of a sequence $\{s_n\}_{n\ge
1}$ by $l.s.(s_n)$.

Our initial research \cite{BMcL05} dealt with cases in which the
limit set was finite. In  \cite{BMcL06} we extended our methods to
give a uniform treatment of finite and infinite cases. In fact, in
\cite{BMcL06}, we studied asymptotics for approximants for
infinite matrix products, continued fractions, and recurrence
relations of Poincar{\'e} type. Limit set information easily
follows from the asymptotics.

In the papers \cite{BMcL05} and \cite{BMcL06}, the authors studied
limit sets in the specific context  of sequences of the form
\[
f\left(\prod_{i=1}^nD_i\right),
\]
where $D_i$ is a sequence of complex matrices and $f$ is a function with values in
some compact metric space.

\section{Definitions, Notation and Terminology} \label{DNT}

Limit set equalities in this paper arise  from the
situation
\[\lim_{n\to\infty}d(s_n,t_n)=0
\]
in some metric space $(X,d)$.
Accordingly,
it makes sense to define the equivalence relation $\sim$ on sequences in $X$ by
$\{s_n\}\sim\{t_n\}\iff \lim_{n\to\infty}d(s_n,t_n)=0$. In this situation we
refer to sequences $\{s_n\}$ and $\{t_n\}$ as being asymptotic to each other.
Abusing notation, we often write $s_n\sim t_n$ in
place of $\{s_n\}\sim\{t_n\}$. More generally, we frequently write sequences without braces
when it is clear from context that we are speaking of a sequence, and not the $n$th term.

Let $M_d(\mathbb{C})$ denote the set of $d\times d$ matrices of
complex numbers topologised  using the $l_{\infty}$ norm, denoted
by $||\cdot||$. Let $I$ denote the identity matrix. When we use
product notation for matrices, the product is taken from left to
right; thus
\[
\prod_{i=1}^n A_i:=A_1A_2\cdots A_n.
\]

An infinite continued fraction
\begin{equation}\label{cfgen}
K_{n=1}^{\infty}\frac{a_{n}}{b_{n}}:=\frac{a_{1}}{b_{1}} \+
 \frac{a_{2}}{b_{2}}
\+
 \frac{a_{3}}{b_{3}}
\+\,\cds
\end{equation}
is said to converge if
\begin{equation*}
\lim_{n \to \infty}\frac{a_{1}}{b_{1}} \+
 \frac{a_{2}}{b_{2}}
\+
 \frac{a_{3}}{b_{3}}
\+\,\cds \+
 \frac{a_{n}}{b_{n}}
\end{equation*}
exists in $\widehat{\mathbb{C}}$. Let $\{\omega_{n}\}$ be a sequence
of complex numbers. If
\begin{equation*}
\lim_{n \to \infty}\frac{a_{1}}{b_{1}} \+
 \frac{a_{2}}{b_{2}}
\+
 \frac{a_{3}}{b_{3}}
\+\,\cds \+
 \frac{a_{n}}{b_{n}+\omega_{n}}
\end{equation*}
exist, then this limit is called the {\it {modified limit of
$K_{n=1}^{\infty}a_{n}/b_{n}$ with respect to the sequence
$\{\omega_{n}\}$}}. Detailed discussions of modified continued
fractions as well as further pointers to the literature are given
in \cite{LW92}.

We follow the common convention in analysis of denoting the group
of points on the unit circle by $\mathbb{T}$, or by
$\mathbb{T_\infty}$, and its subgroup of roots of unity of order
$m$, $m$ finite, by $\mathbb{T}_{m}$.  (Note: $\mathbb{T_\infty}$
often denotes the group of all roots of unity; here it denotes the
whole circle group.)


\section{Theorems of Infinite Matrix Products}
The classic theorem on the convergence of infinite products of
matrices seems first to have  been given clearly by Wedderburn
\cite{Wedderburn1}.
\begin{proposition}{(Wedderburn \cite{Wedderburn1,Wedderburn})}\label{Wedderburn} Let $A_i\in M_d(\mathbb{C})$ for $i\ge1$. Then
$\sum_{i\ge1}||A_i||<\infty$ implies that $\prod_{i\ge1}(I+A_i)$
converges in $M_d(\mathbb{C})$.
\end{proposition}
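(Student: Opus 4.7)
The plan is to show that the partial products $P_n := \prod_{i=1}^n (I+A_i)$ form a Cauchy sequence in $M_d(\mathbb{C})$, which is complete, so they must converge. Throughout, I would work with a submultiplicative matrix norm: either the $\ell_\infty$-operator norm (the maximum absolute row sum) is already submultiplicative, or one passes to an equivalent submultiplicative norm, which exists because all norms on the finite-dimensional space $M_d(\mathbb{C})$ are equivalent (so convergence in either norm is the same). Write $S := \sum_{i\ge 1}\|A_i\| < \infty$.

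First I would establish a uniform bound on $\|P_n\|$. Using submultiplicativity, the triangle inequality in the form $\|I+A_i\|\le 1+\|A_i\|$, and $1+x\le e^x$,
\[
\|P_n\| \;\le\; \prod_{i=1}^n (1+\|A_i\|) \;\le\; \exp\!\Bigl(\sum_{i=1}^n \|A_i\|\Bigr) \;\le\; e^S \;=:\; M.
\]

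Next, I would exploit the factorization $P_m = P_n \cdot Q_{n,m}$ for $m>n$, where $Q_{n,m} := \prod_{i=n+1}^m (I+A_i)$. Then
\[
\|P_m - P_n\| \;=\; \|P_n(Q_{n,m}-I)\| \;\le\; M\,\|Q_{n,m}-I\|.
\]
Expanding the product, $Q_{n,m}-I$ is a sum, indexed by the nonempty subsets of $\{n+1,\dots,m\}$, of ordered products of the corresponding $A_i$'s. Applying submultiplicativity and the triangle inequality term by term yields
\[
\|Q_{n,m}-I\| \;\le\; \prod_{i=n+1}^m (1+\|A_i\|) - 1 \;\le\; \exp\!\Bigl(\sum_{i=n+1}^m\|A_i\|\Bigr) - 1.
\]

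The summability of $\|A_i\|$ forces the tail $\sum_{i>n}\|A_i\|\to 0$, so the right-hand side above tends to $0$ as $n\to\infty$, uniformly in $m>n$. Hence $\|P_m-P_n\|\to 0$, so $\{P_n\}$ is Cauchy and therefore convergent in $M_d(\mathbb{C})$. There is no serious obstacle in this proof: the only minor technicality is the choice of matrix norm, which is immediately absorbed by norm equivalence on a finite-dimensional space, after which the conclusion is a routine Banach-space tail estimate.
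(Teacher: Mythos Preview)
Your proof is correct and is the standard Cauchy-sequence argument for this classical result. The paper, being a survey, does not actually include a proof of this proposition: it simply attributes the statement to Wedderburn and cites his original papers, so there is no in-paper argument to compare against. Your treatment of the norm issue---passing from the paper's entrywise $l_\infty$ norm to an equivalent submultiplicative one, so that both the hypothesis $\sum_{i\ge1}\|A_i\|<\infty$ and the conclusion of convergence transfer unchanged---is exactly the right maneuver, and the tail estimate $\|Q_{n,m}-I\|\le\exp\bigl(\sum_{i>n}\|A_i\|\bigr)-1$ is essentially how Wedderburn's own argument runs.
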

In \cite{BMcL05}, our initial motivation was to generalize  the
Ramanujan continued fraction with three limits \eqref{3lim1} to a
continued fraction with $m$ limits, $m \geq 3$. This led us to
consider infinite sequences of matrices converging to $2 \times 2$
matrices with eigenvalues which were distinct roots of unity, and
to examine the divergence of the corresponding infinite matrix
product.

This in turn led us to consider the more general case of infinite
sequences of $p\times p$ matrices, $p \geq 2$, with similar
properties. In \cite{BMcL05} we proved the following result.
\begin{proposition}\label{tm}
Let $p\geq 2$ be an integer and let $M$ be a $p \times p$ matrix
that is diagonalizable and whose eigenvalues are roots of unity.
Let $I$ denote the $p \times p$ identity matrix and let $m$ be the
least positive integer such that
\[
M^{m}=I.
\]
For a $p \times p$ matrix $G$, let
\[
||G||_{\infty}= \max_{1\leq i,j\leq p}|G^{(i,j)}|,
\]
where $G^{(i,j)}$ denotes the element of $G$ in row $i$ and column
$j$. Suppose $\{D_{n}\}_{n=1}^{\infty}$ is a sequence of matrices
such that
\[
\sum_{n=1}^{\infty}||D_{n}-M||_{\infty}< \infty.
\]
 Then
\[F:=\lim_{k \to \infty} \prod_{n=1}^{km}D_{n}
\]
 exists. Here the matrix product means either $D_{1}D_{2} \dots$ or $\dots D_{2}D_{1}$.
Further, for each $j$, $0 \leq j \leq m-1$,
\[
\lim_{k \to \infty} \prod_{n=1}^{km+j}D_{n} = M^{j}F \text{ or }
FM^{j},
\]
depending on whether the products are taken to the left or right.
\end{proposition}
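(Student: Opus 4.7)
The plan is to reduce the proposition to Wedderburn's theorem (Proposition \ref{Wedderburn}) by grouping the factors $D_n$ into consecutive blocks of length $m$. Write $D_n = M + A_n$, so that $\sum_n \|A_n\|_\infty < \infty$ and, in particular, $A_n \to 0$. For $k \ge 1$ set
\[
E_k := D_{(k-1)m+1} D_{(k-1)m+2} \cdots D_{km} = \prod_{j=1}^{m}\bigl(M + A_{(k-1)m+j}\bigr).
\]
If I can establish that $\sum_{k} \|E_k - I\|_\infty < \infty$, then Proposition \ref{Wedderburn} applied to the sequence $I + (E_k - I)$ yields the existence of $F := \lim_{K \to \infty} \prod_{n=1}^{Km} D_n = \prod_{k \ge 1} E_k$.

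To prove the summability, expand the product defining $E_k$ as a sum over subsets $S \subseteq \{1,\ldots,m\}$, the term associated with $S$ using $A_{(k-1)m+j}$ in position $j$ when $j \in S$ and $M$ otherwise. The term from $S = \emptyset$ equals $M^m = I$, so $E_k - I$ is the sum of the remaining terms. Each of these is an alternating product $M^{a_0} A_{n_1} M^{a_1} A_{n_2} \cdots A_{n_r} M^{a_r}$ with $r = |S| \ge 1$, and its $l_\infty$-norm is at most $(pC_0)^{m}\prod_{i=1}^{r} \|A_{n_i}\|_\infty$, where $C_0 := \sup_{i \ge 0} \|M^i\|_\infty$ is finite because $M$ is diagonalizable with eigenvalues on the unit circle. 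The terms with $r = 1$ contribute, up to a fixed multiplicative constant, $\sum_k \sum_{j=1}^{m} \|A_{(k-1)m+j}\|_\infty = \sum_n \|A_n\|_\infty < \infty$. For $r \ge 2$ I bound each $r$-fold product by $\max_j \|A_{(k-1)m+j}\|_\infty \cdot \bigl(\sup_n \|A_n\|_\infty\bigr)^{r-1}$, and again the telescoped sum is dominated by $\sum_n \|A_n\|_\infty$. Summing over $r = 1,\ldots,m$ gives $\sum_k \|E_k - I\|_\infty < \infty$, and Proposition \ref{Wedderburn} supplies $F$.

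For the residual indices $0 \le j \le m-1$, in the left-to-right convention write
\[
\prod_{n=1}^{km+j} D_n \;=\; \biggl(\prod_{n=1}^{km} D_n\biggr) \cdot D_{km+1}D_{km+2}\cdots D_{km+j}.
\]
Because $D_n \to M$ entrywise, the trailing product of $j \le m-1$ terms tends to $M^j$ as $k \to \infty$, and the whole expression tends to $F M^j$. The right-to-left case is identical up to reversing the order of multiplication, giving $M^j F$.

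The main obstacle I expect is the bookkeeping of the norm inequality, since the max-entry norm $\|\cdot\|_\infty$ is not submultiplicative: one has only $\|XY\|_\infty \le p\,\|X\|_\infty \|Y\|_\infty$. Fortunately each $E_k$ is a product of exactly $m$ matrices, so at most $m-1$ factors of $p$ are ever acquired in a single term; combined with the uniform bound $C_0$, this produces the fixed constant $(pC_0)^m$ used above and lets the series for $\sum_k \|E_k - I\|_\infty$ be controlled against $\sum_n \|A_n\|_\infty$. Everything else is the book-keeping of indices in the block decomposition.
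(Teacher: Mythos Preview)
Your argument is correct. The blocking $E_k=\prod_{j=1}^m D_{(k-1)m+j}$, the expansion of $E_k-I$ over nonempty subsets, and the norm estimate (with the extra factors of $p$ coming from the failure of submultiplicativity of $\|\cdot\|_\infty$) all go through as you describe, and Wedderburn then gives the limit $F$; the tail product of $j$ factors tends to $M^j$, finishing the residual case.

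As for comparison: this paper is a survey and does not actually prove Proposition~\ref{tm} here---it simply quotes the result from \cite{BMcL05}. The closest thing to a proof strategy visible in the paper is the more general Theorem~\ref{DM}, and that takes a somewhat different route: rather than exploiting $M^m=I$ to block the product into near-identity chunks, one shows directly that $\bigl(\prod_{i=1}^n D_i\bigr)\bigl(\prod_{i=1}^n M_i\bigr)^{-1}$ converges, using only that $\prod M_i$ and its inverse stay bounded. Specialized to $M_i\equiv M$ this gives $\prod_{i=1}^n D_i\sim F M^n$, from which the statement for each residue class follows at once. Your blocking argument is the more elementary, hands-on proof tailored to the finite-order hypothesis $M^m=I$; the Theorem~\ref{DM} approach is slicker and immediately generalizes to the case where the eigenvalues lie on the unit circle but are not roots of unity.
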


A natural progression was to replace the matrix $M$ in the
proposition above with a sequence of matrices $\{M_i\}$. In
\cite{BMcL06} we proved the following result.
\begin{theorem}\label{DM}
Suppose $\{M_i\}$ and $\{D_i\}$ are sequences of complex matrices
such that the two sequences (for $\epsilon = \pm1$)
\begin{equation}\label{cond1}
\left\|\left(\prod_{i=1}^nM_i\right)^{\epsilon}\right\|
\end{equation}
are bounded and
\begin{equation}\label{cond2}
\sum_{i\ge 1}\|D_i-M_i\|<\infty.
\end{equation}
Then
\begin{equation}
F:=\lim_{n\to\infty}\left(\prod_{i=1}^n
D_i\right)\left(\prod_{i=1}^n M_i\right)^{-1}
\end{equation}
exists and $\det(F)\neq 0$ if and only if $\det(D_i) \neq 0$ for
all $i\geq 1$.

As sequences
\begin{equation}\label{asym}
\prod_{i=1}^n D_i\sim F\prod_{i=1}^n M_i.
\end{equation}
More generally, let $f$ be a  continuous function from the domain
\[
\overline{\left\{F\prod_{i=1}^n M_i:n\ge h\right\}}\cup
\bigcup_{n\ge h} \left\{\prod_{i=1}^n D_i\right\},
\]
for some integer $h\ge1$, into a metric space $G$. Then the domain
of $f$ is compact in $M_d(\mathbb{C})$ and $f(\prod_{i=1}^n
D_i)\sim f(F\prod_{i=1}^n M_i)$. Finally
\begin{equation}
l.s.\left(\prod_{i=1}^n D_i\right)=l.s.\left(F\prod_{i=1}^n
M_i\right),
\end{equation}
and
\begin{equation}
l.s.\left(f\left(\prod_{i=1}^n
D_i\right)\right)=l.s.\left(f\left(F\prod_{i=1}^n
M_i\right)\right).
\end{equation}
\end{theorem}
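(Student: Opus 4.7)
The plan is to set $P_n = \prod_{i=1}^n D_i$ and $Q_n = \prod_{i=1}^n M_i$ and to study the sequence $R_n := P_n Q_n^{-1}$ by a telescoping argument. Using $P_k = P_{k-1}D_k$ and $Q_k^{-1} = M_k^{-1}Q_{k-1}^{-1}$, one computes that
\[
R_k - R_{k-1} = P_{k-1}(D_k - M_k)M_k^{-1}Q_{k-1}^{-1} = R_{k-1}\,Q_{k-1}(D_k-M_k)Q_k^{-1}.
\]
By hypothesis \eqref{cond1}, $\|Q_{k-1}\|$ and $\|Q_k^{-1}\|$ are bounded by some constant $C_0$, so
\[
\|R_k - R_{k-1}\| \le C_0^2\,\|R_{k-1}\|\,\|D_k - M_k\|.
\]

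Next I would run a Gronwall-type bound on $\|R_k\|$: the above inequality gives $\|R_k\| \le \|R_{k-1}\|\bigl(1 + C_0^2\|D_k-M_k\|\bigr)$, and since $\sum \|D_k-M_k\| < \infty$ by \eqref{cond2}, the infinite product $\prod_k(1+C_0^2\|D_k-M_k\|)$ converges, giving a uniform bound $\|R_k\| \le C_1$. Feeding this back, $\sum_k \|R_k-R_{k-1}\| < \infty$, so $\{R_n\}$ is Cauchy in $M_d(\mathbb{C})$ and converges to some $F$; this proves existence of the limit. The asymptotic relation \eqref{asym} is immediate: $\|P_n - FQ_n\| = \|(R_n - F)Q_n\| \le C_0\|R_n - F\| \to 0$.

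For the determinant claim, one direction is obvious: if some $\det(D_k)=0$, then $\det(P_n)=0$ for all $n \ge k$, so $\det(R_n) = \det(P_n)/\det(Q_n) \equiv 0$ and continuity of $\det$ forces $\det(F)=0$. For the converse, assuming every $D_i$ is invertible, I would apply the same telescoping idea to $R_k^{-1} = Q_k P_k^{-1}$:
\[
R_k^{-1} - R_{k-1}^{-1} = Q_{k-1}(M_k - D_k)D_k^{-1}P_{k-1}^{-1} = Q_{k-1}(M_k-D_k)Q_k^{-1}R_k^{-1},
\]
which yields $\|R_k^{-1}\|(1 - C_0^2\|D_k-M_k\|) \le \|R_{k-1}^{-1}\|$. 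Since $\|D_k-M_k\| \to 0$, a tail product argument bounds $\|R_k^{-1}\|$ uniformly, so $\{R_k^{-1}\}$ is Cauchy and converges to an inverse of $F$, giving $\det(F) \ne 0$.

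Finally, the statements about continuous $f$ and limit sets are soft consequences of $P_n \sim FQ_n$. The domain in question is bounded because $\|FQ_n\| \le \|F\|C_0$ and $\|P_n\| = \|R_nQ_n\| \le C_0C_1$, so its closure is a compact subset of $M_d(\mathbb{C})$; hence $f$ is uniformly continuous there, which transports the asymptotic relation from $M_d(\mathbb{C})$ to the metric space $G$. The equality of limit sets then follows from the general fact that two asymptotic sequences in a metric space have the same set of subsequential limits (any convergent subsequence of one forces the corresponding subsequence of the other to converge to the same point). The main obstacle is really the first Gronwall step, establishing a uniform bound on $\|R_k\|$ without any a priori control on $\|P_k\|$; everything else reduces to routine manipulation once that bound is in hand.
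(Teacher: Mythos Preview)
The paper is a survey and does not include a proof of this theorem; in the concluding section it explicitly says most proofs are omitted and refers the reader to \cite{BMcL06}. So there is no in-paper proof to compare against. That said, your telescoping identity can be rewritten as $R_k = R_{k-1}\bigl(I + Q_{k-1}(D_k-M_k)Q_k^{-1}\bigr)$, which is precisely the reduction to Wedderburn's infinite-product criterion (Proposition~\ref{Wedderburn}) that the paper quotes immediately before stating this theorem; so your approach is almost certainly the intended one, and the Gronwall step you flag as the ``main obstacle'' is just Wedderburn's bound in disguise.

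Two small points. First, the paper's $\|\cdot\|$ is the entrywise $l_\infty$ norm, which is not submultiplicative; your inequalities need harmless dimension-dependent constants throughout. Second, and more substantively, for the compactness claim you show only that the domain is bounded and then invoke uniform continuity on its \emph{closure}; but $f$ is assumed continuous only on the domain itself, and the theorem asserts that the domain is compact, so you must check it is already closed. This follows from the asymptotic you have just established: any accumulation point of $\{P_n:n\ge h\}$ is also an accumulation point of $\{FQ_n:n\ge h\}$ (since $\|P_n-FQ_n\|\to 0$) and hence already lies in $\overline{\{FQ_n:n\ge h\}}$. With that observation the domain is closed and bounded in $M_d(\mathbb{C})$, hence compact, and your uniform-continuity transfer to $G$ goes through.
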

Theorem \ref{DM} had several interesting applications to certain
classes of continued fractions, recurrence sequences, and $(r,s)$-matrix
continued fractions.

\section{Theorems on Continued Fractions} \label{A}

We begin by stating our general theorem on the asymptotics and
 limit sets of the sequence of
approximants of a class of continued fractions.
The theorem shows that the limit set is a circle (or a
finite subset of a circle) on the Riemann sphere. When the limit
set is a circle, although the set of approximants approaches all
of its points, the approximants usually do not do so in a uniform way.

The following theorem concerns the continued fraction
\begin{equation}\label{basiccf}
 \frac{- \alpha\beta+q_{1}}{\alpha+\beta+p_{1}}
\+ \frac{-\alpha\beta+q_{2}}{\alpha+\beta+p_{2}} \+
 \cds
\+\frac{-\alpha\beta+q_{n}}{\alpha+\beta+p_{n}},
\end{equation}
where the sequences $p_n$ and $q_n$ are absolutely summable and
the constants $\alpha\neq\beta$ are points on the unit circle.

\begin{thm}\label{T1}
Let $\{p_{n}\}_{n\geq 1}$, $\{q_{n}\}_{n \ge 1}$ be complex
sequences satisfying
\begin{align*}
 &\sum_{n=1}^{\infty}|p_{n}|<\infty,& &\sum_{n=1}^{\infty}|q_{n}|<\infty.&
\end{align*}
Let $\alpha$  and $\beta$ satisfy $|\alpha|=|\beta|=1$, $\alpha
\not = \beta$ with the order of $\lambda=\alpha/\beta$ in
$\mathbb{T}$ being $m$ $($where $m$ may be infinite$)$. Assume
that $q_{n}\not = \alpha\beta$ for any $n \geq 1$. Put
\[
f_n(w):= \frac{- \alpha\beta+q_{1}}{\alpha+\beta+p_{1}}
\+ \frac{-\alpha\beta+q_{2}}{\alpha+\beta+p_{2}} \+
 \cds
\+\frac{-\alpha\beta+q_{n}}{\alpha+\beta+p_{n}+w},
\]
so that $f_n:=f_n(0)$ is the sequence of approximants of the continued fraction \eqref{basiccf}.
Then $f_n\sim h(\lambda^{n+1})$ so that $l.s.\left( f_n\right)= h(\mathbb{T}_m)$, where
{\allowdisplaybreaks
\[
h(z)=\frac {az+b}{cz+d},
\]
with the constants $a,b,c,d\in\mathbb{C}$ given by the (existent) limits
\begin{align}\label{abcddef}
a&=\lim_{n\to\infty}\alpha^{-n}(P_n - \beta
P_{n-1}),\\
 b&=-\lim_{n\to\infty}\beta^{-n}(P_n-\alpha P_{n-1}),\notag\\
c&=\lim_{n\to\infty}\alpha^{-n}(Q_n - \beta
Q_{n-1}),\notag\\
 d&=-\lim_{n\to\infty}\beta^{-n}(Q_n-\alpha Q_{n-1}),\notag
\end{align}
where $P_n$ and $Q_n$ are the $n$th convergents of the continued
fraction \eqref{basiccf}.
Moreover,
\begin{equation}\label{detid}
\det(h)=ad-bc=(\beta-\alpha)\prod_{n=1}^\infty\left(1-\frac{q_n}{\alpha\beta}\right)\neq 0,
\end{equation}
and the following identities  involving modified versions of \eqref{basiccf} hold in ${\widehat{\mathbb{C}}}$:
\begin{align}\label{hinfin}
h(\infty)&=\frac ac\\&=
\lim_{n\to\infty}
\frac{- \alpha\beta+q_{1}}{\alpha+\beta+p_{1}}
\+ \frac{-\alpha\beta+q_{2}}{\alpha+\beta+p_{2}} \+
 \cds
\+\frac{-\alpha\beta+q_{n-1}}{\alpha+\beta+p_{n-1}}
\+\frac{-\alpha\beta+q_{n}}{\alpha+p_{n}};\notag\\
\label{h0}
h(0)&=\frac bd\\& =
\lim_{n\to\infty}
 \frac{- \alpha\beta+q_{1}}{\alpha+\beta+p_{1}}
\+ \frac{-\alpha\beta+q_{2}}{\alpha+\beta+p_{2}} \+
 \cds
\+\frac{-\alpha\beta+q_{n-1}}{\alpha+\beta+p_{n-1}}
\+\frac{-\alpha\beta+q_{n}}{\beta+p_{n}};\notag
\end{align}
and for $k\in\mathbb{Z}$, we have
\begin{align}\notag
h(\lambda^{k+1})&=\frac {a\lambda^{k+1}+b}{c\lambda^{k+1}+d}\\\label{hvalues}&=
\lim_{n\to\infty}
\frac{- \alpha\beta+q_{1}}{\alpha+\beta+p_{1}}
\+ \frac{-\alpha\beta+q_{2}}{\alpha+\beta+p_{2}} \+
\cds
\+\frac{-\alpha\beta+q_{n}}{\alpha+\beta+p_{n}+\omega_{n-k}},
\end{align}
where
\[
\omega_{n}=-\frac{\alpha^{n} - \beta^{n}}{\alpha^{n-1} -
\beta^{n-1}}\in{\widehat{\mathbb{C}}}, \hspace{25pt} n\in\mathbb{Z}.
\]
}
\end{thm}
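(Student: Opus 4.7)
My plan is to reduce Theorem \ref{T1} to Theorem \ref{DM} by recasting the continued fraction convergent recurrence as a matrix product. The standard three-term recurrence gives
\[
\Pi_n := \begin{pmatrix} P_n & Q_n \\ P_{n-1} & Q_{n-1}\end{pmatrix} = D_n \Pi_{n-1}, \qquad D_n = \begin{pmatrix} \alpha+\beta+p_n & -\alpha\beta+q_n \\ 1 & 0 \end{pmatrix},
\]
with limiting matrix $M = \begin{pmatrix} \alpha+\beta & -\alpha\beta \\ 1 & 0 \end{pmatrix}$ whose characteristic polynomial factors as $(t-\alpha)(t-\beta)$. Because $\alpha\ne\beta$ lie on the unit circle, $M$ is diagonalizable as $M=P\Lambda P^{-1}$ with $\Lambda=\mathrm{diag}(\alpha,\beta)$ and $P=\begin{pmatrix}\alpha&\beta\\1&1\end{pmatrix}$, so $\|M^{\pm n}\|$ is bounded; and $\|D_n-M\|\le |p_n|+|q_n|$ is summable. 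After handling the product-order convention (e.g.\ by transposing, since the recurrence composes right-to-left), Theorem~\ref{DM} supplies a matrix $G$ with $\Pi_n = M^n G + o(1)$, and the hypothesis $q_n\ne\alpha\beta$ makes each $\det D_n=\alpha\beta-q_n$ nonzero, ensuring $\det G\ne 0$.

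Next I would read off explicit asymptotics by diagonalization. Each entry of $M^n G$ is a linear combination of $\alpha^n$ and $\beta^n$, so
\[
P_n = A\alpha^{n+1}+B\beta^{n+1}+o(1), \qquad Q_n = C\alpha^{n+1}+D\beta^{n+1}+o(1)
\]
for constants $A,B,C,D$ determined by $G$ and $P$. Dividing and normalising by $\beta^{n+1}$ yields
\[
f_n = \frac{A\lambda^{n+1}+B}{C\lambda^{n+1}+D}+o(1),
\]
which is exactly $f_n\sim h(\lambda^{n+1})$ for $h(z)=(Az+B)/(Cz+D)$. To pin down the formulas \eqref{abcddef}, form $\alpha^{-n}(P_n-\beta P_{n-1})$: the $\beta^{n+1}$ contribution cancels, and the remaining $\alpha^{n}$ contribution tells us (up to the scalar $(\alpha-\beta)/\alpha$ absorbed into the identification) that this sequence converges to $a$. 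The analogous combinations give $b,c,d$, and the identification $a=A,\ b=B,\ c=C,\ d=D$ is then forced up to a common nonzero scalar, which is irrelevant for the Möbius map $h$.

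For the determinant identity \eqref{detid}, I would combine two computations of $\det \Pi_n$. On one hand, the recurrence gives the telescoping $\det\Pi_n = \det\Pi_0 \cdot \prod_{i=1}^n\det D_i = -(\alpha\beta)^n\prod_{i=1}^n(1-q_i/(\alpha\beta))$. On the other hand, inserting the asymptotic expansions for $P_n,Q_n$ into $P_nQ_{n-1}-P_{n-1}Q_n$ collapses (the $\alpha^{2n}$ and $\beta^{2n}$ terms cancel) to $(\alpha\beta)^n(\alpha-\beta)(AD-BC)+o(1)$. Equating the two expressions and dividing by $(\alpha\beta)^n$ (unimodular) yields \eqref{detid}. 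For the limit set, since $h$ is a homeomorphism of $\widehat{\mathbb{C}}$ and $\{\lambda^{n+1}\}$ has limit set $\mathbb{T}_m$ (a cyclic set when $m<\infty$, the whole circle when $m=\infty$), continuity transfers the asymptotic equality $f_n\sim h(\lambda^{n+1})$ to the identity $l.s.(f_n)=h(\mathbb{T}_m)$.

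Finally, for the modified-limit identities \eqref{hinfin}, \eqref{h0}, \eqref{hvalues}, I would use the standard formula $f_n(\omega) = (P_n+\omega P_{n-1})/(Q_n+\omega Q_{n-1})$. Choosing $\omega=-\beta$ replaces the last entry $\alpha+\beta+p_n$ by $\alpha+p_n$; the $\beta^{n+1}$ terms cancel in both numerator and denominator of the asymptotic quotient, leaving $A/C = h(\infty)$, which is \eqref{hinfin}. Symmetrically $\omega=-\alpha$ gives $B/D=h(0)$, i.e.\ \eqref{h0}. For \eqref{hvalues}, the values $\alpha+\omega_{n-k}$ and $\beta+\omega_{n-k}$ simplify (using the definition of $\omega_{n-k}$) to $\beta^{n-k-1}(\beta-\alpha)/(\alpha^{n-k-1}-\beta^{n-k-1})$ and $\alpha^{n-k-1}(\alpha-\beta)/(\alpha^{n-k-1}-\beta^{n-k-1})$ respectively, and substituting into the asymptotic expansion of $P_n+\omega_{n-k}P_{n-1}$ collects the $\alpha^n,\beta^n$ terms into a combination proportional to $A\alpha^{k+1}-B\beta^{k+1}$, and likewise for $Q$; the ratio reduces after dividing by $\beta^{k+1}$ to $(A\lambda^{k+1}+B)/(C\lambda^{k+1}+D)=h(\lambda^{k+1})$. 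The main obstacle is this last algebraic verification for general $k$, together with the careful bookkeeping of scalar normalizations in identifying the constants of \eqref{abcddef}; everything else is a direct application of Theorem \ref{DM} and elementary linear algebra.
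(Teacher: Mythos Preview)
Your approach is correct and is precisely the route the paper intends: although this survey omits the actual proof (deferring to \cite{BMcL05,BMcL06}), Theorem~\ref{T1} is presented as an application of Theorem~\ref{DM}, obtained by writing the convergent recurrence as a perturbed product of $2\times 2$ matrices with limiting companion matrix $M$ having unimodular eigenvalues $\alpha,\beta$, and then extracting the asymptotics $P_n\sim A\alpha^{n+1}+B\beta^{n+1}$, the determinant identity, and the modified limits exactly as you outline. One small algebraic slip in the step you flagged as the main obstacle: both $\alpha+\omega_{n-k}$ and $\beta+\omega_{n-k}$ carry the \emph{same} factor $(\beta-\alpha)$ in the numerator (not opposite signs), and this is what produces $A\alpha^{k+1}+B\beta^{k+1}$ rather than a difference in the limiting ratio, giving $h(\lambda^{k+1})$ as claimed.
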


As a first application, we can get quite precise information about
the divergence behavior of limit-1 periodic continued fractions of
elliptic type (see \cite{LW92} for more on limit-1 periodic
continued fractions of elliptic type).

We consider  the case where the continued fraction
\begin{equation}\label{lcfgen}
\frac{a_{1}}{b_{1}} \+
 \frac{a_{2}}{b_{2}}
\+
 \frac{a_{3}}{b_{3}}
\+\,\cds
\end{equation}
 is a limit 1-periodic continued fraction of
elliptic type and, in addition,
 \[\sum_{n\ge1}|a_n-a|<\infty, \hspace{25pt}
 \sum_{n\ge1}|b_n-b|<\infty,
\]
for some $a, b \in \mathbb{C}$.

 Set
\[
d:=\left | \frac{b+\sqrt{b^2+4a}}{2} \right | =\left |
\frac{b-\sqrt{b^2+4a}}{2} \right |,
\]
and define
\begin{align*}
&\alpha = \frac{b+\sqrt{b^2+4a}}{2d},& &\beta
=\frac{b-\sqrt{b^2+4a}}{2d}.&
\end{align*}
Then $\alpha \not = \beta$, $|\alpha|=|\beta|=1$. Define, for $n
\geq 1$,  $p_n$ and $q_n$ by
\begin{align*}
&a_n=a+p_n,& &b_n=b+q_n.&
\end{align*}
Thus
\[
K_{n=1}^{\infty}\frac{a+q_{n}}{b+p_{n}} = d\,
K_{n=1}^{\infty}\frac{-\alpha \beta+q_{n}/d^2}{\alpha + \beta +
p_{n}/d}.
\]

The second continued fraction satisfies the conditions of Theorem
\ref{T1}. Thus this theorem can be applied to all limit 1-periodic
continued fractions of elliptic type with $\lim_{n\to\infty}a_n=a$
and $\lim_{n\to\infty}b_n=b$, providing
$\sum_{n\ge1}|a_n-a|<\infty $ and $\sum_{n\ge1}|b_n-b|<\infty $.
Of course, it is known that without any restrictions on how the
limit periodic sequences tend to their limits, the behavior can be
quit complicated, see \cite{LW92}.

Next,  we can obtain (up to a factor of $\pm 1$) the numbers $a$,
$b$, $c$, and $d$ in terms of the modified continued fractions and
the product for $\det(h)$ given in Theorem \ref{T1}.

\begin{cor}\label{hzc}
The linear fractional transformation $h(z)$ defined in Theorem
\ref{T1} has the following expression
\[
h(z)=\frac{A(C-B)z+B(A-C)}{(C-B)z+A-C},
\]
where $A=h(\infty)$, $B=h(0)$, and $C=h(1)$. Moreover, the
constants $a$, $b$, $c$, and $d$ in the theorem have the following
formulas
\[
a=sA(C-B),\quad b=sB(A-C),\quad c=s(C-B),\quad d=s(A-C),
\]
where
\[
s=\pm\sqrt{\frac{(\beta-\alpha)\prod_{n=1}^\infty\left(1-\frac{q_n}{\alpha\beta}\right)}{(A-B)(C-A)(B-C)}}.
\]
\end{cor}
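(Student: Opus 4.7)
The plan is to treat this as the standard reconstruction of a Möbius transformation from its values at three distinct points, with the overall projective scalar pinned down by the determinant identity \eqref{detid} of Theorem \ref{T1}. First I would observe that by \eqref{detid} we have $ad - bc = (\beta-\alpha)\prod_{n\ge 1}(1 - q_n/(\alpha\beta)) \neq 0$, so $h$ is a genuine Möbius transformation and hence a bijection of $\widehat{\mathbb{C}}$. In particular $A = h(\infty)$, $B = h(0)$, $C = h(1)$ are pairwise distinct, which legitimizes the denominators in the claimed formulas.

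Next I would verify by direct substitution that
\[
g(z) := \frac{A(C-B)z + B(A-C)}{(C-B)z + (A-C)}
\]
sends $\infty \mapsto A$, $0 \mapsto B$, $1 \mapsto C$ (for $g(1)$ the numerator becomes $AC-AB+AB-BC = C(A-B)$ and the denominator becomes $A-B$). Since a Möbius transformation is uniquely determined by its values at three distinct points, this forces $h = g$, yielding the first displayed formula. Reading off coefficients, we conclude that there exists a nonzero scalar $s \in \mathbb{C}$ with
\[
(a,b,c,d) = s\bigl(A(C-B),\,B(A-C),\,C-B,\,A-C\bigr),
\]
since coefficient vectors of a Möbius transformation are only defined up to a common scalar.

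To determine $s$, I compute $ad - bc$ in two ways. Directly,
\[
ad - bc = s^{2}\bigl[A(C-B)(A-C) - B(A-C)(C-B)\bigr] = s^{2}(A-B)(A-C)(C-B),
\]
and by \eqref{detid} the same quantity equals $(\beta-\alpha)\prod_{n\ge 1}(1 - q_n/(\alpha\beta))$. Solving for $s^{2}$ and rewriting $(A-B)(A-C)(C-B) = (A-B)(C-A)(B-C)$ gives precisely the stated formula for $s$. The sign ambiguity $s = \pm\sqrt{\cdot}$ is genuine, because only $s^{2}$ is determined by $\det(h)$: replacing $(a,b,c,d)$ by $(-a,-b,-c,-d)$ leaves $h$ unchanged. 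The proof is essentially formal; the only real obstacle is bookkeeping of the factor $s$ and the sign rewrite of the triple product, neither of which is deep.
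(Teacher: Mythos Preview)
Your proof is correct and complete. The survey paper does not include a proof of this corollary (it explicitly defers proofs to the authors' papers \cite{BMcL05} and \cite{BMcL06}), but the argument you give---reconstructing a M\"obius transformation from three values and then fixing the projective scalar via the determinant identity \eqref{detid}---is the standard and natural route, and your bookkeeping (in particular the rewrite $(A-C)(C-B)=(C-A)(B-C)$) is accurate.
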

It is interesting that the linear fractional transformation which
describes the limit set of the divergent continued fraction
\[
K_{n=1}^{\infty} \frac{-\alpha \beta + q_n}{\alpha+\beta+p_n}
\]
can be described completely in terms of three convergent modified
continued fractions.

Let $\mathbb{T}^\prime$ denote the image of $\mathbb{T}$ under $h$, that
is, the limit set of the sequence $\{f_n\}$.
The main conclusion of the theorem can be expressed by the statement
\begin{equation}\label{fn}
f_n\sim h(\lambda^{n+1}),
\end{equation}
where $h$ is the linear fractional transformation in the theorem.
 It is well
known that when $\lambda$ is not a root of unity, $\lambda^{n+1}$ is uniformly
distributed on
$\mathbb{T}$.  However, the linear fractional transformation $h$
stretches and compresses  arcs of the circle $\mathbb{T}$, so that the distribution of
 $h(\lambda^{n+1})$ in arcs of $\mathbb{T}^\prime$ is no longer uniform.
Thus, although the limit set in the case where $\lambda$ is not a
root of unity is a circle, the concentration of approximants is
not uniform around the circle.

Fortunately, the distribution of approximants is completely
controlled by the known parameters $a$, $b$, $c$, and $d$. The
following corollary gives the points on the limit sets whose
neighborhood arcs have the greatest and least concentrations of
approximants. (We do not take the space here to give a precise definition
of what this means;  interested readers should consult the author's
paper \cite{BMcL06}.)

\begin{cor}\label{cnc}
When $m=\infty$ and $cd\neq 0$, the points on
\[\frac{a\mathbb{T}_m+b}{c\mathbb{T}_m+d}
\]
with the highest and lowest concentrations of approximants are
\[
\frac{\displaystyle{\frac{a}{c}|c|+\frac{b}{d}|d|}}{|c|+|d|}
\qquad\text{and}\qquad\frac{-\displaystyle{\frac{a}{c}|c|+\frac{b}{d}|d|}}{-|c|+|d|},
\]
respectively.
If either $c=0$ or $d=0$, then all points on the limit set have the same concentration. The radius of
the limit set circle in $\mathbb{C}$ is
\[
\left|\frac{\alpha-\beta}{|c|^2-|d|^2}
\prod_{n=1}^{\infty}\left(1-\frac{q_n}{\alpha\beta}\right)\right|.
\]
The limit set is a line in $\mathbb{C}$ if and only if $|c|=|d|$, and in
this case the point of least concentration is $\infty$.
\end{cor}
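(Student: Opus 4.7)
The plan is to combine three ingredients: the asymptotic relation $f_n \sim h(\lambda^{n+1})$ from Theorem \ref{T1}, the fact that in the case $m=\infty$ the sequence $\{\lambda^{n+1}\}$ is equidistributed on $\mathbb{T}$ by Weyl's theorem, and the behaviour of the derivative $h'$ on $\mathbb{T}$. The density of points $h(\lambda^{n+1})$ along the image curve $h(\mathbb{T})$ is, in the precise sense defined in \cite{BMcL06}, proportional to $1/|h'(\zeta)|$ at $\zeta = e^{i\phi}$, because pushing the uniform measure $d\phi/2\pi$ forward by $h$ introduces a Jacobian factor of $|h'(\zeta)|^{-1}$. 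Since $h'(z) = (ad-bc)/(cz+d)^2$ and $ad-bc\neq 0$ by \eqref{detid}, the concentration at $h(\zeta)$ is a positive multiple of $|c\zeta+d|^2$; maxima and minima of the concentration on $h(\mathbb{T})$ correspond exactly to maxima and minima of $|c\zeta+d|$ for $\zeta\in\mathbb{T}$.

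Next I would compute these extrema explicitly. For $\zeta = e^{i\phi}$,
\[
|c\zeta+d|^2 = |c|^2+|d|^2 + 2\,\mathrm{Re}(c\bar d\,\zeta),
\]
which ranges from $(|c|-|d|)^2$ to $(|c|+|d|)^2$ as $\zeta$ sweeps $\mathbb{T}$, with the maximum attained at $\zeta_+ = d|c|/(c|d|)$ and the minimum at $\zeta_- = -d|c|/(c|d|)$. Substituting $\zeta_+$ into $h$: $c\zeta_+ = d|c|/|d|$, so $c\zeta_++d = d(|c|+|d|)/|d|$, while $a\zeta_++b = (a d|c| + bc|d|)/(c|d|)$; dividing numerator and denominator by $cd$ yields exactly $(\tfrac{a}{c}|c|+\tfrac{b}{d}|d|)/(|c|+|d|)$, the stated point of highest concentration. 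The analogous substitution with $\zeta_-$ gives the point of lowest concentration. If $c=0$ or $d=0$, then $|c\zeta+d|$ is constant on $\mathbb{T}$, so $|h'|$ is constant and the concentration is uniform, as claimed.

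For the radius I would determine the centre $C$ of $h(\mathbb{T})$ by demanding that $|h(\zeta)-C|$ be constant on $\mathbb{T}$. Writing
\[
h(\zeta) - C \;=\; \frac{(a-Cc)\zeta + (b-Cd)}{c\zeta + d},
\]
a ratio $|a'\zeta+b'|/|c\zeta+d|$ is constant on $\mathbb{T}$ iff $(a',b')$ is a scalar multiple of $(\bar d,\bar c)$; solving $a-Cc = \mu\bar d$ and $b-Cd=\mu\bar c$ gives $C=(a\bar c-b\bar d)/(|c|^2-|d|^2)$ and $\mu = (bc-ad)/(|c|^2-|d|^2)$, so the radius equals $|\mu| = |ad-bc|/\bigl||c|^2-|d|^2\bigr|$. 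Applying the determinant identity \eqref{detid} replaces $|ad-bc|$ by $|\alpha-\beta|\prod_n|1-q_n/(\alpha\beta)|$, yielding the formula in the corollary. Finally, if $|c|=|d|$ then $\zeta_- = -d|c|/(c|d|)$ satisfies $c\zeta_- + d = 0$, so $h$ has a pole on $\mathbb{T}$ and $h(\mathbb{T})$ is a line in $\mathbb{C}$; this pole is the point at which $|h'|$ blows up, so $\infty$ is the point of least concentration, as stated.

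The only genuinely delicate point is step one: justifying that "concentration of approximants" really is governed by $1/|h'(\zeta)|$ in the sense the corollary requires. This is the pushforward-of-equidistributed-measure statement that is formalised in \cite{BMcL06}; once that is quoted, the rest of the argument is the elementary extremal calculation above together with the standard Möbius centre-and-radius computation.
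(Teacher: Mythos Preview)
Your argument is correct. The paper is a survey and does not actually supply a proof of this corollary; it explicitly defers both the precise definition of ``concentration'' and the proofs to \cite{BMcL06}. Your approach --- equidistribution of $\lambda^{n+1}$ on $\mathbb{T}$, pushforward of the uniform measure by $h$ giving density proportional to $1/|h'(\zeta)|=|c\zeta+d|^2/|ad-bc|$, then the elementary extremisation of $|c\zeta+d|$ on $\mathbb{T}$ and the standard M\"obius centre--radius computation --- is exactly the natural one and is presumably what \cite{BMcL06} does. I checked each step: the extremisers $\zeta_\pm=\pm d|c|/(c|d|)$, the evaluations $h(\zeta_\pm)$, the identity $|\bar d\,\zeta+\bar c|=|c\zeta+d|$ on $\mathbb{T}$ that underlies your centre computation, the resulting $\mu=(bc-ad)/(|c|^2-|d|^2)$, and the line case $|c|=|d|$ all come out as stated. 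Your own caveat is the right one: the only nontrivial input is the formalisation in \cite{BMcL06} that ``concentration of approximants'' is governed by the pushforward density, and you have correctly flagged that as a citation rather than attempted to reprove it.
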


\begin{cor}\label{linec}
If the limit set of the continued fraction
in \eqref{basiccf} is a line in $\mathbb{C}$, then
the point of highest concentration of approximants in the limit set
is exactly
\[
\frac{h(\infty)+h(0)}{2},
\]
the average of the first two modifications of \eqref{basiccf}
given in Theorem \ref{T1}.
\end{cor}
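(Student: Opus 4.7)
The plan is to deduce Corollary \ref{linec} as a direct arithmetic consequence of Corollary \ref{cnc} combined with the identifications $h(\infty)=a/c$ and $h(0)=b/d$ from equations \eqref{hinfin} and \eqref{h0} of Theorem \ref{T1}. The strategy is thus purely computational; no new analytic input is needed.

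First I would invoke Corollary \ref{cnc} to assert that the hypothesis ``the limit set is a line in $\mathbb{C}$'' is equivalent to $|c|=|d|$. Before substituting, I would verify that we remain in the regime $cd\neq 0$ where the explicit formula for the point of highest concentration from Corollary \ref{cnc} applies. Indeed, if one of $c,d$ vanished while $|c|=|d|$, then both would vanish, forcing $ad-bc=0$; but \eqref{detid} gives $\det(h)=(\beta-\alpha)\prod_{n\ge 1}\!\left(1-q_n/(\alpha\beta)\right)\neq 0$, since $\alpha\neq\beta$ and $q_n\neq\alpha\beta$ for every $n$ by hypothesis. So $c$ and $d$ are both nonzero and Corollary \ref{cnc} is applicable.

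Next, writing $r:=|c|=|d|>0$ and substituting into the formula from Corollary \ref{cnc}, the point of highest concentration becomes
\[
\frac{\dfrac{a}{c}|c|+\dfrac{b}{d}|d|}{|c|+|d|}
=\frac{r\!\left(\dfrac{a}{c}+\dfrac{b}{d}\right)}{2r}
=\frac{1}{2}\!\left(\frac{a}{c}+\frac{b}{d}\right).
\]
Invoking \eqref{hinfin} and \eqref{h0} to replace $a/c$ by $h(\infty)$ and $b/d$ by $h(0)$ gives the desired expression $\bigl(h(\infty)+h(0)\bigr)/2$, and these two quantities are precisely the values of the first two modified continued fractions displayed in Theorem \ref{T1}, completing the identification.

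There is essentially no genuine obstacle in this argument; the only step that requires a moment's care is ruling out the degenerate case $c=d=0$, and as indicated above this is handled cleanly by the nonvanishing of $\det(h)$ provided by \eqref{detid}. Everything else is algebraic simplification.
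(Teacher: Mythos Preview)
Your argument is correct and is precisely the intended derivation: Corollary~\ref{linec} is stated immediately after Corollary~\ref{cnc} as a direct specialization, and your substitution $|c|=|d|$ into the highest-concentration formula, together with the identifications $h(\infty)=a/c$ and $h(0)=b/d$ from \eqref{hinfin}--\eqref{h0}, is exactly what the authors have in mind. The paper itself, being a survey, omits the proof (see Section~\ref{C}), but your handling of the degenerate case $c=d=0$ via \eqref{detid} is the right justification and matches the spirit of the surrounding results.
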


It is also possible to derive a convergent continued fractions which
have the same limit as the modified continued fractions in Theorem
\ref{T1}. These are given in the following corollary.

\begin{cor}\label{cBM}
Let $\alpha$, $\beta$, $\{p_n\}$, $\{q_n\}$, $h(z)$,
 be as in Theorem \ref{T1}. Then
\begin{multline}\label{BMcf1}
h(\infty)=-\beta +\frac{q_1+\beta p_1}{\alpha +p_1} \+ \frac{(q_1-
\alpha \beta)(q_2+ \beta p_2)}{(\alpha+p_2)(q_1+\beta
p_1)+\beta(q_2+\beta p_2)}\\ \+ K_{n=3}^{\infty} \frac{(q_{n-1}-
\alpha \beta)(q_n+ \beta p_n)(q_{n-1}+ \beta
p_{n-1})}{(\alpha+p_n)(q_{n-1}+\beta p_{n-1})+\beta(q_n+\beta
p_n)},
\end{multline}
\begin{multline}\label{BMcf2}
h(0)=-\alpha +\frac{q_1+\alpha p_1}{\beta +p_1} \+ \frac{(q_1-
\alpha \beta)(q_2+ \alpha p_2)}{(\beta+p_2)(q_1+\alpha
p_1)+\alpha(q_2+\alpha p_2)}\\ \+ K_{n=3}^{\infty} \frac{(q_{n-1}-
\alpha \beta)(q_n+ \alpha p_n)(q_{n-1}+ \alpha
p_{n-1})}{(\beta+p_n)(q_{n-1}+\alpha p_{n-1})+\alpha(q_n+\alpha
p_n)}.
\end{multline}

Let $k \in \mathbb{Z}$ and assume  that $\alpha / \beta$ is not a
root of unity. Set
\[
\omega_{n} = -
\frac{\alpha^{n-k}-\beta^{n-k}}{\alpha^{n-k-1}-\beta^{n-k-1}},
\hspace{25pt}\text{ for } n \geq k':=\max \{3, k+3\}.
\]
 Then
 \begin{multline}\label{bmk}
h(\lambda^{k+1})=\frac{- \alpha\beta+q_{1}}{\alpha+\beta+p_{1}} \+
\cds \+
\frac{-\alpha\beta+q_{k'-1}}{\alpha+\beta+p_{k'-1}} \+
\frac{-\alpha\beta+q_{k'}}{\alpha+\beta+p_{k'}+\omega_{k'} }
\\\+ \frac{ -\alpha\beta+q_{k'+1}-\omega_{k'}(
\alpha+\beta+p_{k'+1}+\omega_{k'+1})}{\alpha+\beta+p_{k'+1}+
\omega_{k'+1} } \+ K_{n=k'+2}^{\infty}\frac{ c_{n} }
   {d_{n} },
\end{multline}
where
\begin{align*}
c_{n}& = \left(q_{n-1}-\alpha  \beta \right)\frac{ -\alpha  \beta
+q_n -\omega_{n-1}
 \left(\alpha
   +\beta+p_n +\omega_{n}\right)
} {-\alpha
   \beta +q_{n-1}-\omega_{n-2}
   \left(\alpha +\beta +p_{n-1}+\omega_{n-1}
   \right)
   }\\
d_{n}&= \alpha+\beta+p_n+\omega_{n}- \omega_{n-2} \frac{-\alpha
\beta+q_n- \omega_{n-1}
 \left(\alpha+\beta+p_n +\omega_{n}
 \right)
} {-\alpha
   \beta +q_{n-1}-\omega_{n-2}
   \left(\alpha +\beta
+p_{n-1}+\omega_{n-1} \right)
   }.
\end{align*}
\end{cor}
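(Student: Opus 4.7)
The plan is to obtain all three identities by applying the Bauer--Muir transformation to the continued fraction \eqref{basiccf} with three different modifying sequences $\{\omega_n\}$, followed by equivalence transformations to simplify the form. Recall the transformation: given $K(a_n/b_n)$ and $\{\omega_n\}_{n\ge 0}$ with $\lambda_n := a_n - \omega_{n-1}(b_n+\omega_n)$ nonzero for all $n\ge1$, the associated transform is
\[
\omega_0 + \frac{\lambda_1}{b_1+\omega_1} \+ K_{n=2}^\infty \frac{a_{n-1}\lambda_n/\lambda_{n-1}}{b_n+\omega_n-\omega_{n-2}\lambda_n/\lambda_{n-1}},
\]
whose ordinary $n$-th approximant coincides with the $\{\omega_n\}$-modified $n$-th approximant of the original continued fraction. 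Consequently, once the algebraic form is verified, both convergence and identification of the limit follow immediately from \eqref{hinfin}, \eqref{h0}, or \eqref{hvalues} of Theorem~\ref{T1}.

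For \eqref{BMcf1} I would take $\omega_n\equiv-\beta$. A direct computation gives $\lambda_n=q_n+\beta p_n$ and $b_n+\omega_n=\alpha+p_n$; substituting into the Bauer--Muir formula and then clearing the compound fractions $\lambda_n/\lambda_{n-1}$ via equivalence transformations produces the right-hand side of \eqref{BMcf1}. The limit is forced to be $h(\infty)$ by \eqref{hinfin}. The parallel derivation of \eqref{BMcf2} uses $\omega_n\equiv-\alpha$, $\lambda_n=q_n+\alpha p_n$, and \eqref{h0} in place of \eqref{hinfin}.

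For \eqref{bmk} I would take the non-constant sequence $\omega_n=-(\alpha^{n-k}-\beta^{n-k})/(\alpha^{n-k-1}-\beta^{n-k-1})$ from \eqref{hvalues}. The key algebraic observation is the identity $\omega_{n-1}(\alpha+\beta+\omega_n)=-\alpha\beta$, which reduces $\lambda_n$ in the perturbed setting to $\lambda_n = q_n - p_n\omega_{n-1}$. Because $\omega_m$ is singular at $m=1$, the Bauer--Muir reduction can only be applied from index $k'=\max\{3,k+3\}$ onward; the initial segment of \eqref{basiccf} is left intact, and the transition term at position $k'+1$ absorbs the boundary contribution from $\omega_{k'}$. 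For $n\ge k'+2$ the Bauer--Muir numerators and denominators are exactly the $c_n$ and $d_n$ displayed in the statement, and convergence to $h(\lambda^{k+1})$ follows from \eqref{hvalues}.

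The main technical obstacle is ensuring $\lambda_n\neq 0$ throughout so that the Bauer--Muir transformation is legitimately defined. For the constant modifications this reduces to $q_n+\beta p_n\neq 0$ or $q_n+\alpha p_n\neq 0$; since $\{p_n\},\{q_n\}$ are only assumed absolutely summable these quantities could in principle vanish at isolated indices, and must either be ruled out by hypothesis or sidestepped by a perturbation argument. In the non-constant case the leading-order contribution to $\lambda_n$ vanishes identically, and one is left only with the perturbation $q_n-p_n\omega_{n-1}$, so again a genericity condition is required. Once these nonvanishing conditions are arranged, the remaining verification that the Bauer--Muir output matches the displayed expressions is a routine, if tedious, exercise in equivalence transformations.
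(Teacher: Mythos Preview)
Your approach is correct and is precisely the one indicated by the paper: although this survey omits the proof of Corollary~\ref{cBM} (deferring to \cite{BMcL06}), it states explicitly before Corollary~\ref{rBM} that the Bauer--Muir transform was used to produce these convergent continued fractions, and \eqref{BMRcf3} is visibly the specialization $q_n=0$, $p_n=q^n$ of your derivation of \eqref{BMcf1}. Your choices of modifying sequences $\omega_n\equiv-\beta$, $\omega_n\equiv-\alpha$, and the $\omega_n$ of \eqref{hvalues}, together with the identification of the limits via \eqref{hinfin}, \eqref{h0}, and \eqref{hvalues}, match the intended argument.
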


Before continuing, we give an example which illustrates some of
the results mentioned above. Let $|p|$, $|q|<1$, and define
\[
G(p,q,\alpha, \beta):= \frac{- \alpha\beta+q}{\alpha+\beta+p} \+
\frac{-\alpha\beta+q^{2}}{\alpha+\beta+p^{2}} \+
 \cds
\+\frac{-\alpha\beta+q^{n}}{\alpha+\beta+p^{n}} \+ \cds .
\]

For $p=0.3$, $q=0.2$, $\alpha=\exp(\imath
\sqrt{11})$,$\beta=\exp(\imath \sqrt{13})$, we use Corollary
\ref{cBM} with $p_n=p^n$, $q_n=q^n$ and $k=-1$ and compute the
limits of the three continued fractions there to find
\begin{align*}
h(\infty)&= 1.13121 + 0.772998 i, \\
h(0)&=  1.20138 + 0.0347473 i,\\
h(1)&=    -0.412160 - 0.486753 i.
\end{align*}

We then apply Corollary \ref{hzc} and compute
\begin{align*}
s&=2.97370 + 0.773678 i, \\
a&=0.581867 + 0.408182 i, \\
b&=-0.670885 - 0.294104 i,\\
c&= 0.518727 +
 0.00637067 i,\\
d&= -0.565036 - 0.228462 i.
\end{align*}

With
\[
h(z) := \frac{a z+b}{c z+d},
\]
we now compare the predicted limit set $h(\mathbb{T})$ with the
sequence of approximants. Figure \ref{fig3} shows the first 3000
approximants of $G(0.3, 0.2, \exp(\imath \sqrt{11}),\exp(\imath
\sqrt{13}))$ and the  circle $h(\mathbb{T})$. We see that the
limit set is exactly what is predicted by Theorem \ref{T1}. The
large dots show the points of highest (top) and lowest (bottom)
points of concentration of approximants, as predicted by Corollary
\ref{cnc}, namely  $1.16911 + 0.374194 i$ and $1.60256 -
  4.18725 i$. We see  that prediction and mathematical fact agree in
  this case also.

\begin{figure}[htbp]
\centering \epsfig{file=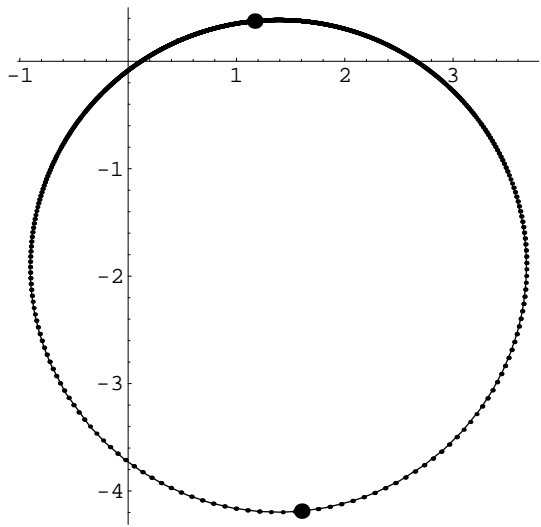, width= 200pt} \caption{The
convergence of $G(0.3, 0.2,\exp(\imath \sqrt{11}),\exp(\imath
\sqrt{13}))$. } \label{fig3}
\end{figure}

We next consider an example where $\alpha/\beta$ is a root of
unity, so that the limit set is finite. We proceed as above to
compute $h(z)$ (details omitted).  Figure \ref{fig4} shows the
first 3000 approximants of $G(0.3, 0.2,\exp(\imath
\sqrt{11}),\exp(\imath (\sqrt{11}+2 \pi/17)))$ and its convergence
to the 17 limit points, together with the  circle $h(\mathbb{T})$.

\begin{figure}[htbp]
\centering \epsfig{file=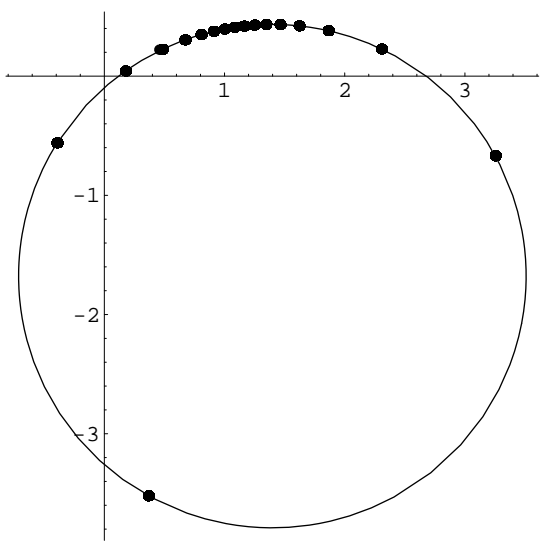, width= 200pt} \caption{The
convergence of $G(0.3, 0.2,\exp(\imath \sqrt{11}),\exp(\imath
(\sqrt{11}+2 \pi/17)))$.} \label{fig4}
\end{figure}

Figure \ref{fig5} shows the image of all seventeen 17th roots of
unity under $h$. Once again the actual limit set and the predicted
limit set agree perfectly.

\begin{figure}[htbp]
\centering \epsfig{file=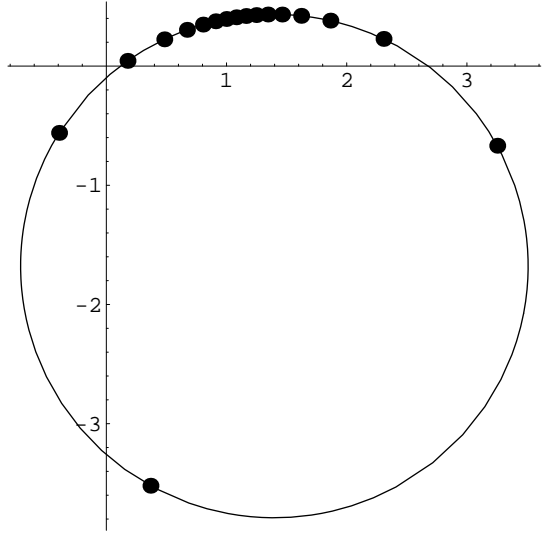, width= 200pt} \caption{The image
of the seventeen 17th roots of unity under $h$.} \label{fig5}
\end{figure}

Lastly, we consider the continued fraction from the beginning of
the paper, $K_{n=1}^{\infty}-1/(4/3)$. If we follow the same kind
of analysis as above, we find that
\begin{align*}
a&=-2/3 + \sqrt{5}/3 i, \\
b&=2/3 + \sqrt{5}/3 i,\\
c&= 1,\\
d&= -1.
\end{align*}
Corollary \ref{linec} predicts that the highest concentration of
approximants occurs at $(a/c+b/d)/2=-2/3$. Figure \ref{fig6} shows
the distribution of the first 1200 approximants of the continued
fraction (about 100 extreme values were omitted), once again
showing agreement with the theory.
\begin{figure}[htbp]
\centering \epsfig{file=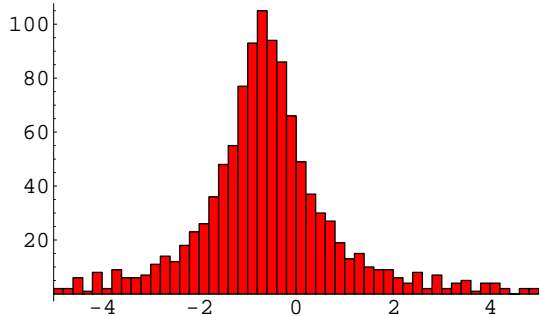, width= 200pt} \caption{The
distribution of the first 1200 approximants of
$K_{n=1}^{\infty}-1/(4/3)$.} \label{fig6}
\end{figure}

\subsection{An Infinite Family of Divergence Theorems}
An interesting special case of Theorem \ref{T1} occurs when
$\alpha$ and $\beta$ are distinct $m$-th roots of unity ($m \geq
2$). In this situation the continued fraction
\[
\frac{- \alpha\beta+q_{1}}{\alpha+\beta+p_{1}} \+
\frac{-\alpha\beta+q_{2}}{\alpha+\beta+p_{2}} \+
\frac{-\alpha\beta+q_{3}}{\alpha+\beta+p_{3}} \+
\frac{-\alpha\beta+q_4}{\alpha+\beta+p_{4}} \+ \cds\]
becomes limit
periodic and the sequences of approximants  in the $m$ different
arithmetic progressions modulo $m$ converge. The corollary below,
which is also proved in \cite{BMcL05}, is an easy consequence of
Theorem \ref{T1}.

\begin{cor}\label{c1}
Let $\{p_{n}\}_{n\geq 1}$, $\{q_{n}\}_{n \ge 1}$ be complex
sequences satisfying
\begin{align*}
 &\sum_{n=1}^{\infty}|p_{n}|<\infty,& &\sum_{n=1}^{\infty}|q_{n}|<\infty.&
\end{align*}
Let $\alpha$  and $\beta$ be distinct roots of unity and let $m$ be
the least positive integer such that $\alpha^m=\beta^{m}=1$ . Define
{\allowdisplaybreaks
\begin{equation*}
G:= \frac{- \alpha\beta+q_{1}}{\alpha+\beta+p_{1}} \+
\frac{-\alpha\beta+q_{2}}{\alpha+\beta+p_{2}} \+
\frac{-\alpha\beta+q_{3}}{\alpha+\beta+p_{3}} \+ \cds.
\end{equation*}
} Let $\{P_{n}/Q_{n}\}_{n=1}^{\infty}$ denote the sequence of
approximants of $G$. If $q_{n}\not = \alpha\beta$ for any $n \geq
1$, then $G$ does not converge. However, the sequences of numerators
and denominators in each of the $m$ arithmetic progressions modulo
$m$ do converge. More precisely,
 there exist complex numbers $A_{0}, \dots  , A_{m-1}$ and $B_{0}, \dots  , B_{m-1}$
such that, for $0 \leq i< m$, {\allowdisplaybreaks
\begin{align}\label{ABlim}
&\lim_{k \to \infty} P_{m\,k+i}=A_{i}, & &\lim_{k \to \infty}
Q_{m\,k+i}=B_{i}.&
\end{align}
Extend the sequences $\{A_i\}$ and $\{B_i\}$ over all integers by
making them periodic modulo $m$ so that (\ref{ABlim}) continues to
hold. Then for integers $i$,
\begin{equation}\label{lol}
A_i=\left(\frac{A_1-\beta A_0}{\alpha-\beta}\right)\alpha^i
+\left(\frac{\alpha A_0-A_1}{\alpha-\beta}\right)\beta^i,
\end{equation}
and
\begin{equation}\label{lolb}
B_i=\left(\frac{B_1-\beta B_0}{\alpha-\beta}\right)\alpha^i
+\left(\frac{\alpha B_0-B_1}{\alpha-\beta}\right)\beta^i.
\end{equation}

Moreover,
\begin{equation}\label{detAB}
A_iB_{j}-A_{j}B_i=-(\alpha\beta)^{j+1}
\frac{\alpha^{i-j}-\beta^{i-j}}{\alpha-\beta}
\prod_{n=1}^{\infty}\left(1-\frac{q_{n}}{\alpha\beta}\right).
\end{equation}
} Put  $\alpha := \exp( 2 \pi i a/m)$,  $\beta := \exp (2 \pi i
b/m)$, $0 \leq a <b <m$, and $r:=m/\gcd(b-a,m)$. Then $G$ has $r$
distinct limits in $\widehat{\mathbb{C}}$ which are given by $A_j/B_j$,
$1\le j\le r$. Finally, for $k \geq 0$ and $1 \leq j \leq r$,
{\allowdisplaybreaks
\begin{equation*}
\frac{A_{j+kr}}{B_{j+kr}}=\frac{A_{j}}{B_{j}}.
\end{equation*}
}
\end{cor}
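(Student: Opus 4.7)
The plan is to reduce everything to Theorem~\ref{T1} and extract the stated conclusions by elementary manipulations using the fact that $\alpha^m=\beta^m=1$. All hypotheses of Theorem~\ref{T1} are satisfied (with the order of $\lambda=\alpha/\beta$ a finite divisor of $m$), so it immediately delivers the existence of the four constants $a,b,c,d$ of \eqref{abcddef}, the determinant evaluation $ad-bc=(\beta-\alpha)\prod_{n\ge1}(1-q_n/(\alpha\beta))\neq 0$, and the asymptotic $f_n\sim h(\lambda^{n+1})$.

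Next I would exploit that $\alpha^{-n}$ and $\beta^{-n}$ are $m$-periodic in $n$. Specifically, $\alpha^{-n}(P_n-\beta P_{n-1})\to a$ along all $n$ is equivalent to $P_n-\beta P_{n-1}\to a\alpha^i$ along each arithmetic progression $n\equiv i\pmod m$, and similarly $P_n-\alpha P_{n-1}\to -b\beta^i$ along the same progression. Subtracting these and dividing by $\alpha-\beta\neq 0$ gives the limit of $P_{n-1}$ along the progression; a reindexing produces
\[
A_i=\frac{a\alpha^{i+1}+b\beta^{i+1}}{\alpha-\beta},\qquad B_i=\frac{c\alpha^{i+1}+d\beta^{i+1}}{\alpha-\beta}.
\]
Formulas \eqref{lol} and \eqref{lolb} are then a short exercise in solving for the coefficients of $\alpha^i,\beta^i$ in terms of the initial data $A_0,A_1$ and $B_0,B_1$ respectively.

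For the determinant identity \eqref{detAB} I would substitute the explicit expressions, expand $A_iB_j-A_jB_i$, collect terms using $\alpha^i\beta^j-\alpha^j\beta^i=(\alpha\beta)^j(\alpha^{i-j}-\beta^{i-j})$, and invoke the value of $ad-bc$ supplied by Theorem~\ref{T1}; the signs reorganize into exactly the asserted product. For the count of distinct limits, I would observe that $\lambda=e^{2\pi i(a-b)/m}$ has order $r=m/\gcd(b-a,m)$ in $\mathbb{T}$, and since $h$ is a bijection of $\widehat{\mathbb{C}}$ (because $\det h\neq 0$), the values $h(\lambda^{j+1})$ for $1\le j\le r$ are pairwise distinct and coincide with $A_j/B_j$, which in particular forces divergence of $G$ because $r\ge 2$ whenever $\alpha\neq\beta$. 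The periodicity $A_{j+kr}/B_{j+kr}=A_j/B_j$ drops out from $\lambda^r=1$, i.e.~$\alpha^r=\beta^r$, which implies $A_{j+kr}=\alpha^{kr}A_j$ and $B_{j+kr}=\alpha^{kr}B_j$.

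The main obstacle is bookkeeping rather than anything conceptual: one must carefully distinguish the $m$-periodicity of $A_i$ and $B_i$ individually from the sharper $r$-periodicity of the ratio $A_i/B_i$, and one must verify that the distinctness of the $r$ subsequential limits $A_j/B_j$ matches exactly the orbit count of $\lambda$ in the circle group. Modulo this care, the corollary is a direct specialization of Theorem~\ref{T1}.
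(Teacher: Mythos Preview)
Your proposal is correct and follows exactly the route the paper indicates: the paper gives no detailed proof here, merely stating that the corollary ``is an easy consequence of Theorem~\ref{T1}'' (with a reference to \cite{BMcL05} for details), and your argument carries out precisely that derivation---extracting $A_i,B_i$ from the limits \eqref{abcddef} via the $m$-periodicity of $\alpha^{-n},\beta^{-n}$, then reading off \eqref{lol}, \eqref{lolb}, \eqref{detAB}, and the rank count from the resulting closed forms together with \eqref{detid}. The bookkeeping you flag (distinguishing the $m$-periodicity of $A_i,B_i$ from the $r$-periodicity of their ratio, and the injectivity of $h$ forcing $r$ distinct limits) is exactly what is needed, and your computations check out.
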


The number $r$ occuring in this theorem is just the number of distinct limits to
which the continued fraction tends.
For this reason, we term it the {\it{rank}} of the
continued fraction.

It is easy to derive  general divergence results from this theorem, including
Theorem \ref{Stern-Stolz1}, the classical
Stern-Stolz theorem \cite{LW92}. The proof of Theorem \ref{Stern-Stolz1}
is immediate from Theorem \ref{T1}. Just set $\omega_{1}=1$,
$\omega_{2} = -1$ (so $m=2$), $q_{n}=0$ and $p_{n} = b_{n}$.
In fact,
Stern-Stolz can be seen as the beginning of an infinite family
of divergence theorems. We first give a generalization of Stern-Stolz,
then give a corollary describing the infinite family. Last, we  list the first
few examples in the infinite family.

To obtain the generalization, take $q_n=a_n$ instead of $q_n=0$.

\begin{cor}\label{SSgen}
Let the sequences $\{a_n\}$ and $\{b_{n}\}$ satisfy $a_n\ne -1$ for $n\ge 1$,  $\sum |a_{n}| <\infty$ and
$\sum |b_{n}| <\infty$.
Then
\[
b_{0}+K_{n=1}^{\infty}\frac{1+a_n}{b_{n}}
\]
diverges. In fact, for $p=0,1$,
\begin{align*}
&\lim_{n \to \infty}P_{2n+p}=A_{p} \not = \infty,& &\lim_{n \to \infty}Q_{2n+p}=B_{p} \not = \infty,&
\end{align*}
and
\[
A_{1}B_{0}-A_{0}B_{1} = \prod_{n=1}^{\infty}(1+a_n).
\]
\end{cor}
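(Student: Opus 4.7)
The plan is to apply Theorem \ref{T1}, in the form packaged by Corollary \ref{c1}, to the continued fraction $K_{n=1}^{\infty}(1+a_n)/b_n$ and then account for the additive $b_0$ as a trivial prefactor. The right specialization is $\alpha=1$, $\beta=-1$, $p_n=b_n$, $q_n=a_n$: this gives $-\alpha\beta+q_n=1+a_n$ and $\alpha+\beta+p_n=b_n$, matches the partial quotients exactly, and the hypotheses $\sum|p_n|<\infty$, $\sum|q_n|<\infty$, $q_n\neq\alpha\beta$ translate precisely to the assumptions of Corollary \ref{SSgen}. Moreover $\lambda=\alpha/\beta=-1$ has order $m=2$ in $\mathbb{T}$, so Corollary \ref{c1} applies.

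First I would record the direct consequence of Corollary \ref{c1} with this data: there exist finite complex numbers $A_0,A_1,B_0,B_1$ with $\lim_n P_{2n+p}=A_p$ and $\lim_n Q_{2n+p}=B_p$ for $p=0,1$, for the convergents $P_n/Q_n$ of $K_{n=1}^{\infty}(1+a_n)/b_n$. Adding the constant $b_0$ at the front replaces the numerator convergents by $P_n+b_0Q_n$ and leaves the denominator convergents unchanged. Hence the limits $A_p$ in the corollary correspond to those of $b_0+K_{n=1}^{\infty}(1+a_n)/b_n$ after the harmless shift $A_p\mapsto A_p+b_0B_p$, while the $B_p$ are unaffected; both remain finite, which gives the convergence-along-parities part of the claim.

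Next I would compute the determinant. From \eqref{detAB} with $i=1$, $j=0$, $\alpha=1$, $\beta=-1$, $\alpha\beta=-1$, $q_n=a_n$,
\begin{equation*}
A_1B_0-A_0B_1=-(-1)^{1}\cdot\frac{1-(-1)}{1-(-1)}\cdot\prod_{n=1}^{\infty}\!\left(1-\frac{a_n}{-1}\right)=\prod_{n=1}^{\infty}(1+a_n).
\end{equation*}
The shift $A_p\mapsto A_p+b_0B_p$ preserves the Plücker bracket $A_1B_0-A_0B_1$, so the identity survives verbatim for the continued fraction with the $b_0$ in front.

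Finally, for the divergence assertion, observe that the infinite product $\prod(1+a_n)$ converges (since $\sum|a_n|<\infty$) and is nonzero (since every factor $1+a_n$ is nonzero). Therefore $A_1B_0-A_0B_1\neq 0$, which forces $A_1/B_1\neq A_0/B_0$ in $\widehat{\mathbb{C}}$; the even and odd subsequences of approximants have different limits, so the continued fraction diverges. There is no real obstacle in this argument — every ingredient is already packaged in Theorem \ref{T1} and Corollary \ref{c1}; the only point requiring any care is the bookkeeping that the leading $b_0$ shifts $A_p$ but leaves both the convergence of the parity subsequences and the determinant identity intact.
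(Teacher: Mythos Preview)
Your proof is correct and takes essentially the same approach as the paper: the paper's proof is the single sentence ``This follows immediately from Theorem \ref{T1}, upon setting $\omega_{1}=1$, $\omega_{2}=-1$ (so $m=2$), $q_{n}=a_n$ and $p_{n}=b_{n}$,'' and you have simply spelled out the details---the determinant computation via \eqref{detAB}, the bookkeeping for the additive $b_0$, and the nonvanishing of the product forcing divergence---all of which are routine once the substitution is made.
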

\begin{proof}
This follows immediately from Theorem \ref{T1}, upon setting $\omega_{1}=1$,
$\omega_{2} = -1$ (so $m=2$), $q_{n}=a_n$ and $p_{n} = b_{n}$.
\end{proof}

We have not been able to find Corollary \ref{SSgen} in the literature.

The natural infinite family of Stern-Stolz type theorems is described by the following corollary.

\begin{cor}\label{cor1}
Let the sequences $\{a_n\}$ and $\{b_{n}\}$ satisfy
$a_{n} \not = 1$ for $n\ge 1$, $\sum |a_{n}| <\infty$ and $\sum |b_{n}| <\infty$. Let
$m \geq 3$ and let
$\omega_{1}$ be a primitive $m$-th root of unity. Then
\[
b_{0}+K_{n=1}^{\infty}\frac{-1+ a_{n}}{\omega_{1}+ \omega_{1}^{-1}+b_{n}}
\]
does not converge, but the numerator and denominator convergents in each of the
$m$ arithmetic progressions modulo $m$ do converge.
If $m$ is even, then for $1 \leq p \leq m/2$,
\begin{align*}
&\lim_{n \to \infty}P_{mn+p}=-\lim_{n \to \infty}P_{mn+p+m/2}=A_{p} \not = \infty,& \\
&\lim_{n \to \infty}Q_{mn+p}=-\lim_{n \to \infty}Q_{mn+p+m/2}=B_{p} \not = \infty.&
\end{align*}
If $m$ is odd, then
the continued fraction has rank $m$. If $m$ is even, then
the continued fraction has rank $m/2$. Further, for $2 \leq p \leq m'$, where
$m'=m$ if $m$ is odd and $m/2$ if $m$ is even,
\[
A_{p}B_{p-1}-A_{p-1}B_{p} = -\prod_{n=1}^{\infty}(1-a_n).
\]
\end{cor}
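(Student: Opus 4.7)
The plan is to derive Corollary \ref{cor1} directly from Corollary \ref{c1} by the specialization $\alpha := \omega_1$, $\beta := \omega_1^{-1}$, $q_n := a_n$, $p_n := b_n$. With this choice, $-\alpha\beta + q_n = -1 + a_n$ and $\alpha + \beta + p_n = \omega_1 + \omega_1^{-1} + b_n$, so the tail matches $G$ in Corollary \ref{c1}. Moreover $\alpha, \beta$ are distinct roots of unity (since $m \geq 3$ forces $\omega_1 \neq \omega_1^{-1}$), the least positive integer annihilating both is precisely $m$ by primitivity, and the hypothesis $q_n \neq \alpha\beta$ becomes $a_n \neq 1$, which is given.

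The rank computation is the first checkpoint. Writing $\omega_1 = \exp(2\pi i a/m)$ with $\gcd(a,m) = 1$, one has $\omega_1^{-1} = \exp(2\pi i (m-a)/m)$, so in the notation of Corollary \ref{c1}, $b - a \equiv -2a \pmod m$ and $\gcd(b-a, m) = \gcd(2a, m) = \gcd(2, m)$ by primitivity. Hence the rank is $r = m/\gcd(2,m)$, equal to $m$ when $m$ is odd and $m/2$ when $m$ is even, matching the statement.

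The leading $b_0$ requires a brief bookkeeping argument. Writing $P_n^0, Q_n^0$ for the convergents of $G$ (no $b_0$), the standard three-term recurrences together with the initial conditions $(P_{-1}, P_0) = (1, b_0)$ versus $(1, 0)$ give $P_n = b_0 Q_n^0 + P_n^0$ and $Q_n = Q_n^0$ for $n \geq 0$. Consequently the existence of $A_p^0 := \lim_k P_{mk+p}^0$ and $B_p^0 := \lim_k Q_{mk+p}^0$ provided by Corollary \ref{c1} yields finite limits $A_p = b_0 B_p^0 + A_p^0$ and $B_p = B_p^0$, so the continued fraction diverges exactly by converging along arithmetic progressions mod $m$. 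For even $m$, the identity $\omega_1^{m/2} = -1$ means $\alpha^{m/2} = \beta^{m/2} = -1$; plugging into equations \eqref{lol} and \eqref{lolb} immediately gives $A_{i+m/2}^0 = -A_i^0$ and $B_{i+m/2}^0 = -B_i^0$, and these sign flips transfer verbatim to $A_p, B_p$ since both summands in $A_p = b_0 B_p^0 + A_p^0$ change sign simultaneously.

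Finally, the determinant identity follows from \eqref{detAB}: with $\alpha\beta = 1$, $q_n = a_n$, and $(i,j) = (p, p-1)$, the right side collapses to $-\frac{\alpha - \beta}{\alpha - \beta}\prod_{n \geq 1}(1 - a_n) = -\prod_{n \geq 1}(1 - a_n)$. A one-line check shows $A_p B_{p-1} - A_{p-1} B_p = A_p^0 B_{p-1}^0 - A_{p-1}^0 B_p^0$, because the $b_0$-contributions cancel telescopically, so the identity passes through untouched. The whole argument is essentially a translation exercise once Corollary \ref{c1} is in hand; the only delicate point is the parity-dependent collapse $\alpha^{m/2} = \beta^{m/2} = -1$, which is simultaneously responsible for the halving of the rank and for the anti-periodicity $A_{p+m/2} = -A_p$.
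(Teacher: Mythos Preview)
Your proof is correct and follows essentially the same approach as the paper: the paper's proof is the single line ``In Theorem~\ref{T1}, let $\omega_2 = 1/\omega_1$,'' which is exactly your specialization $\alpha=\omega_1$, $\beta=\omega_1^{-1}$. You have simply fleshed out the details (rank via $\gcd(2a,m)=\gcd(2,m)$, the $b_0$ bookkeeping, the anti-periodicity from $\omega_1^{m/2}=-1$, and the collapse of \eqref{detAB}), and your citation of Corollary~\ref{c1} rather than Theorem~\ref{T1} is in fact the more precise reference for the determinant and rank claims.
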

\begin{proof}
In Theorem \ref{T1}, let $\omega_{2} = 1/\omega_{1}$.
\end{proof}
Some explicit examples are given below.

\begin{example}\label{SSseq}
Let the sequences $\{a_n\}$ and $\{b_{n}\}$ satisfy
$a_{n} \not = 1$ for $n\ge 1$, $\sum |a_{n}| <\infty$ and $\sum |b_{n}| <\infty$. Then
each of the following continued fractions diverges:

(i) The following continued fraction has rank three:
 \begin{equation}\label{c3}
b_{0}+K_{n=1}^{\infty}\frac{-1+ a_{n}}{1+b_{n}}.
\end{equation}
 In fact, for $p=1,2,3$,
\begin{align*}
&\lim_{n \to \infty}P_{6n+p}=-\lim_{n \to \infty}P_{6n+p+3}=A_{p} \not = \infty,& \\
&\lim_{n \to \infty}Q_{6n+p}=-\lim_{n \to \infty}Q_{6n+p+3}=B_{p} \not = \infty.&
\end{align*}
(ii) The following continued fraction has rank four:
 \begin{equation}\label{c4}
b_{0}+K_{n=1}^{\infty}\frac{-1+ a_{n}}{\sqrt{2}+b_{n}}.
\end{equation}
In fact, for $p=1,2,3,4$,
\begin{align*}
&\lim_{n \to \infty}P_{8n+p}=-\lim_{n \to \infty}P_{8n+p+4}=A_{p} \not = \infty,& \\
&\lim_{n \to \infty}Q_{8n+p}=-\lim_{n \to \infty}Q_{8n+p+4}=B_{p} \not = \infty.&
\end{align*}
(iii) The following continued fraction has rank five:
\begin{equation}\label{c5}
b_{0}+K_{n=1}^{\infty}\frac{-1+ a_{n}}{(1-\sqrt{5})/2+b_{n}}.
\end{equation}
 In fact, for $p=1,2,3,4,5$,
\begin{align*}
&\lim_{n \to \infty}P_{5n+p}=A_{p} \not = \infty,& &\lim_{n \to \infty}Q_{5n+p}=B_{p} \not = \infty.&
\end{align*}
(iv) The following continued fraction has rank six:
 \begin{equation}\label{c6}
b_{0}+K_{n=1}^{\infty}\frac{-1+ a_{n}}{\sqrt{3}+b_{n}}.
\end{equation}
In fact, for $p=1,2,3,4,5,6$,
\begin{align*}
&\lim_{n \to \infty}P_{12n+p}=-\lim_{n \to \infty}P_{12n+p+6}=A_{p} \not = \infty,& \\
&\lim_{n \to \infty}Q_{12n+p}=-\lim_{n \to \infty}Q_{12n+p+6}=B_{p} \not = \infty.&
\end{align*}
In each case we have, for $p$ in the appropriate range, that
\[
A_{p}B_{p-1}-A_{p-1}B_{p} = -\prod_{n=1}^{\infty}(1-a_n).
\]
\end{example}
\begin{proof}
In Corollary \ref{cor1}, set

(i) $\omega_{1}=\exp ( 2 \pi i/6)$;

(ii) $\omega_{1} = \exp (2 \pi i/8)$;

(iii) $\omega_{1}=\exp ( 2 \pi i/5)$;
(iv)  $\omega_{1}=\exp ( 2 \pi i/12)$.
\end{proof}

The cases $\omega_{1}=\exp ( 2 \pi i/m)$, $m=3,4,10$ give continued fractions
that are the same as those above after an equivalence
transformation and renormalization of the sequences $\{a_n\}$ and$\{b_n\}$.
Note that the continued fractions (\ref{c4}) and (\ref{c6}) are, after an equivalence transformation
and renormalizing the sequences $\{a_n\}$ and $\{b_n\}$, of the forms
\begin{equation}\label{four}
b_{0}+K_{n=1}^{\infty}\frac{-2+ a_{n}}{{2}+b_{n}},
\end{equation}
and
\begin{equation}\label{six}
b_{0}+K_{n=1}^{\infty}\frac{-3+ a_{n}}{{3}+b_{n}},
\end{equation}
respectively.  Because of the equivalence transformations employed, the convergents do not
tend to limits in (\ref{four}) or (\ref{six}). Also, it should be mentioned that Theorem 3.3 of \cite{ABSYZ02} is essentially the
special case $a_n=0$ of part (i) of our example. Nevertheless (\ref{four})
and (\ref{six}) have ranks $4$ and $6$ respectively.

Corollary \ref{c1} now makes it trivial to construct $q$-continued fractions with arbitrarily many limits.

\begin{example}
Let $f(x)$, $g(x) \in \mathbb{Z}[q][x]$ be
polynomials with  zero constant term.
Let $\omega_{1}$, $\omega_{2}$ be distinct roots of unity
and suppose  $m$ is the least positive integer such that $\omega_{1}^m=\omega_{2}^{m}=1$ .
Define
{\allowdisplaybreaks
\begin{equation*}
G(q):=
\frac{-\omega_{1}\omega_{2}+g(q)}{\omega_{1}+\omega_{2}+f(q)}
\+
\frac{-\omega_{1}\omega_{2}+g(q^2)}{\omega_{1}+\omega_{2}+f(q^2)}
\+
\frac{-\omega_{1}\omega_{2}+g(q^3)}{\omega_{1}+\omega_{2}+f(q^3)}
\+
\cds.
\end{equation*}
}
Let $|q|<1$.
If $g(q^n) \not = \omega_{1}\omega_{2}$ for any $n \geq 1$,
then $G(q)$ does not converge.
However,  the sequences of approximants of
$G(q)$ in each of the $m$ arithmetic progressions modulo $m$ converge
to values in $\hat{\mathbb{C}}$. The continued fraction has rank $m/\gcd(b-a,m)$,
where $a$ and $b$ are as defined in Theorem \ref{T1}.
\end{example}

From this example we can conclude that (\ref{RR}) and (\ref{R3}) are far from unique
examples and many other $q$-continued fractions with multiple
limits can be immediately written down. Thus, to Ramanujanize a bit, one can
immediately see that the continued fractions
\begin{equation}
\K{n\ge1}{\infty}\frac{-1/2}{1+q^n}\qquad\text{ and }\qquad \K{n\ge1}{\infty}\frac{-1/2+q^n}{1+q^n}
\end{equation}
both have rank four, while the continued fractions
\begin{equation}
\K{n\ge1}{\infty}\frac{-1/3}{1+q^n}\qquad\text{ and }\qquad \K{n\ge1}{\infty}\frac{-1/3+q^n}{1+q^n}
\end{equation}
both have rank six.

\subsection{Application: Generalization of a Continued Fraction of Ramanujan}

In \cite{BMcL06} we gave a non-trivial example of the preceding
theory, the inspiration for which is a beautiful result of
Ramanujan.

\begin{thm}\label{tmain}
Let $|q|<1$, $|\alpha|=|\beta|=1$, $\alpha \not = \beta$, and the
order of $\lambda:=\alpha/\beta$ in $\mathbb{T}$ be $m$.
For  $x$, $y \not =0$ and fixed $|q|<1$, define
\[
P(x,y)=\sum_{n=0}^{\infty}
\frac{x^{n}q^{n(n+1)/2}}
{(q)_{n}(y\,q)_{n}}.
\]
Then {\allowdisplaybreaks
\begin{multline}\label{ram3gen}
l.s.\left(\frac{ -\alpha \beta }{\alpha +\beta +q} \- \frac{\alpha
\beta}{  \alpha +\beta +q^{2}} \- \frac{\alpha \beta}{  \alpha
+\beta  +q^{3}}
\cds\right)\\
\qquad=-\frac{\beta
P(q\alpha^{-1},\beta\alpha^{-1})\mathbb{T}_m-\alpha
P(q\beta^{-1},\alpha\beta^{-1})} {
P(\alpha^{-1},\beta\alpha^{-1})\mathbb{T}_m-
P(\beta^{-1},\alpha\beta^{-1})}.
\end{multline}
} Moreover,
 {\allowdisplaybreaks\begin{multline}\label{ram3gen2}
\frac{ -\alpha \beta }{\alpha +\beta +q} \- \frac{\alpha \beta}{
\alpha +\beta +q^{2}} \- \frac{\alpha \beta}{  \alpha +\beta
+q^{3}}\cds \- \frac{\alpha \beta}{  \alpha
+\beta  +q^{n}}\\
\sim
-\frac{\beta
P(q\alpha^{-1},\beta\alpha^{-1})\lambda^{n+1}-\alpha
P(q\beta^{-1},\alpha\beta^{-1})} {
P(\alpha^{-1},\beta\alpha^{-1})\lambda^{n+1}-
P(\beta^{-1},\alpha\beta^{-1})}.
\end{multline}
}
\end{thm}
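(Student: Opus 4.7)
The plan is to apply Theorem \ref{T1} to the continued fraction in \eqref{ram3gen2} and then identify the resulting linear fractional transformation $h$ explicitly in terms of $P(x,y)$. After converting the ``$-$'' signs to ``$+$'' signs (via the standard equivalence $\widetilde a_1=a_1$, $\widetilde a_n=-a_n$ for $n\ge 2$), the continued fraction becomes $K_{n=1}^{\infty}(-\alpha\beta)/(\alpha+\beta+q^n)$, which is exactly \eqref{basiccf} with $p_n=q^n$ and $q_n=0$. Since $|q|<1$, both summability hypotheses of Theorem \ref{T1} hold, and the theorem yields $f_n\sim h(\lambda^{n+1})$ and $l.s.(f_n)=h(\mathbb{T}_m)$ with $h(z)=(az+b)/(cz+d)$, where $a,b,c,d$ are the limits in \eqref{abcddef}. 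So both \eqref{ram3gen} and \eqref{ram3gen2} reduce to computing $a,b,c,d$ in terms of $P(x,y)$.

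To do this I would first derive the contiguous relation
\[
P(x,y)=(1+y+qx)\,P(qx,y)-y\,P(q^2x,y),
\]
which is immediate from the coefficient recursion $(1-q^n)(1-yq^n)F_n=q^nF_{n-1}$ for the summand $F_n=q^{n(n+1)/2}/[(q;q)_n(yq;q)_n]$ of $P$. Both numerator and denominator convergents satisfy $Y_n-(\alpha+\beta+q^n)Y_{n-1}+\alpha\beta\,Y_{n-2}=0$ with standard initial data $P_{-1}=Q_0=1$, $P_0=Q_{-1}=0$. Trying the ansatz $Y_n=c^nP(q^{n+1}x_0,y)$ and rewriting $P(q^{n-2}x_0,y)$ by the contiguous relation, the recurrence collapses to a two-term polynomial equation in $q^n$ whose vanishing forces $c^2-(\alpha+\beta)c+\alpha\beta=0$, $y=c^2/(\alpha\beta)$, and $x_0=cq/(\alpha\beta)$. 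The two roots $c=\alpha,\beta$ therefore yield the linearly independent solutions
\[
Y_n^{(1)}=\alpha^n\,P(q^{n+1}/\beta,\alpha/\beta),\qquad Y_n^{(2)}=\beta^n\,P(q^{n+1}/\alpha,\beta/\alpha).
\]

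Writing $P_n=AY_n^{(1)}+BY_n^{(2)}$ and $Q_n=A'Y_n^{(1)}+B'Y_n^{(2)}$, the scalars $A,B,A',B'$ are determined by the $2\times 2$ linear system obtained at $n=-1,0$, whose four matrix entries are precisely the values $P(1/\beta,\alpha/\beta)$, $P(1/\alpha,\beta/\alpha)$, $P(q/\beta,\alpha/\beta)$, $P(q/\alpha,\beta/\alpha)$ that appear in \eqref{ram3gen}. To evaluate $a=\lim_n\alpha^{-n}(P_n-\beta P_{n-1})$, I would observe that
\[
\alpha^{-n}\bigl(Y_n^{(2)}-\beta Y_{n-1}^{(2)}\bigr)=(\beta/\alpha)^n\bigl[P(q^{n+1}/\alpha,\beta/\alpha)-P(q^n/\alpha,\beta/\alpha)\bigr]\longrightarrow 0,
\]
since the bracket is $O(q^n)$ while $|\beta/\alpha|=1$, whereas $\alpha^{-n}(Y_n^{(1)}-\beta Y_{n-1}^{(1)})\to(\alpha-\beta)/\alpha$. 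Hence $a=A(\alpha-\beta)/\alpha$; analogous computations give $b=B(\alpha-\beta)/\beta$, $c=A'(\alpha-\beta)/\alpha$, $d=B'(\alpha-\beta)/\beta$. Substituting the Cramer-rule expressions for $A,B,A',B'$ into $h(z)=(az+b)/(cz+d)$, common scalar factors cancel from numerator and denominator, and a short rearrangement produces the formula displayed in \eqref{ram3gen}; the asymptotic statement \eqref{ram3gen2} is then just the $f_n\sim h(\lambda^{n+1})$ clause of Theorem \ref{T1}.

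The main obstacle is the ansatz step: finding the form $Y_n=c^nP(q^{n+1}x_0,y)$ that marries the analytically defined $P$ to the three-term recurrence, and noticing that the shift $q^{n+1}$ (rather than $q^n$ or $q^{-n}$) is what matches the perturbation $q^nY_{n-1}$ in the recurrence. Once the explicit solutions are in hand, everything else decouples cleanly: the $(\beta/\alpha)^n$ cross-terms in the limits are killed by $q^n$-order differences of $P$, so the limits defining $a,b,c,d$ exist without any auxiliary estimates, and the final algebraic simplification producing \eqref{ram3gen} is routine.
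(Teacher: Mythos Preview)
Your approach is correct and is precisely the route the paper's framework suggests: this survey does not actually include a proof of Theorem~\ref{tmain} (it defers to \cite{BMcL06}), but it presents the result as ``a non-trivial example of the preceding theory,'' i.e.\ of Theorem~\ref{T1}, and your proof is exactly that application carried out in full. The reduction to \eqref{basiccf} with $p_n=q^n$, $q_n=0$ is right (the ``equivalence transformation'' remark is unnecessary---the lowered minus signs already give $a_n=-\alpha\beta$ for all $n$---but harmless). The contiguous relation $P(x,y)=(1+y+qx)P(qx,y)-yP(q^2x,y)$ checks out, the two solutions $Y_n^{(1)},Y_n^{(2)}$ satisfy the recurrence, the $O(q^n)$ estimate killing the cross-terms in the limits \eqref{abcddef} is valid, and the final Cramer's-rule algebra collapses exactly to the displayed form of $h$.

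One small slip: in your ansatz step you write $x_0=cq/(\alpha\beta)$, but matching the $q^n$-coefficient in the substituted recurrence forces $x_0=c/(\alpha\beta)$ (so that $q^{n+1}x_0=q^{n+1}/\beta$ when $c=\alpha$, consistent with your stated $Y_n^{(1)}$). This is a typo, not a gap---your explicit solutions are the correct ones.
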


In \cite{BMcL06} we also used the Bauer-Muir Transform to produce
some convergent continued fractions. One such example is the
following.
\begin{corollary}\label{rBM}
Let $|q|<1$ and let $\alpha$ and $\beta$ be distinct points on the
unit circle such that $\alpha/\beta$ is not a root of unity.
 Then
\begin{equation}\label{BMRcf3}
-\beta +\frac{\beta q}{\alpha+q} \+
 K_{n=2}^{\infty}\frac{- \alpha \beta   q}
   { q^n+\alpha +\beta q }
   =-\beta \frac{
   \displaystyle{
   \sum_{n=0}^{\infty}\frac{\alpha^{-n}q^{n(n+3)/2}}
   {(q;q)_{n}(\beta q/\alpha ;q)_{n}} }}
   {\displaystyle{
   \sum_{n=0}^{\infty}\frac{\alpha^{-n}q^{n(n+1)/2}}
   {(q;q)_{n}(\beta q/\alpha ;q)_{n}} }}.
\end{equation}
\end{corollary}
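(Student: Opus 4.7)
The plan is to derive \eqref{BMRcf3} by applying the Bauer-Muir transformation to the continued fraction studied in Theorem~\ref{tmain} (specialized with $p_n=q^n$, $q_n=0$), using the constant modifier sequence $w_n\equiv -\beta$, and then identifying the resulting convergent continued fraction with the closed form $h(\infty)$ guaranteed by that theorem. First, I would evaluate the M\"obius transformation $h$ of Theorem~\ref{tmain} at $z=\infty$, yielding
\[
h(\infty)=-\beta\,\frac{P(q\alpha^{-1},\beta\alpha^{-1})}{P(\alpha^{-1},\beta\alpha^{-1})},
\]
and expand each $P$ using $P(x,y)=\sum_{n\ge 0} x^n q^{n(n+1)/2}/((q)_n(yq)_n)$; the substitution $x=q/\alpha$ in the numerator shifts $q^{n(n+1)/2}$ to $q^{n(n+3)/2}$, producing exactly the ratio of $q$-series on the right side of \eqref{BMRcf3}.

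Next, I would apply the Bauer-Muir transformation to the continued fraction with $a_n=-\alpha\beta$, $b_n=\alpha+\beta+q^n$, and constant modifier $w_n=-\beta$ for all $n\ge 0$. The discriminants collapse cleanly:
\[
\lambda_n=a_n-w_{n-1}(b_n+w_n)=-\alpha\beta+\beta(\alpha+q^n)=\beta q^n,
\]
nonzero for every $n$ since $|q|<1$. Applying the standard Bauer-Muir formulas then gives a new continued fraction with constant term $b_0+w_0=-\beta$, first partial $\lambda_1/(b_1+w_1)=\beta q/(\alpha+q)$, and for $n\ge 2$
\[
\tilde a_n=a_n\lambda_n/\lambda_{n-1}=-\alpha\beta q,\qquad \tilde b_n=b_n+w_n-w_{n-2}\lambda_n/\lambda_{n-1}=\alpha+q^n+\beta q,
\]
which matches the left side of \eqref{BMRcf3} term by term.

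Finally, I would invoke the defining property of the Bauer-Muir transformation: its $(n+1)$-th approximant equals the modified approximant $S_n(-\beta)$ of the original continued fraction, where $S_n(w)$ denotes the $n$-th approximant with $w$ added to the last partial denominator. By \eqref{hinfin} in Theorem~\ref{T1}, $S_n(-\beta)\to h(\infty)$ as $n\to\infty$ (the modification $\alpha+\beta+q^n\mapsto\alpha+q^n$ is exactly the one corresponding to $h(\infty)=a/c$), so the left side of \eqref{BMRcf3} converges to $h(\infty)$, which by the first step equals the right side. The main technical obstacle is the bookkeeping check that the Bauer-Muir closed forms for $\tilde a_n$ and $\tilde b_n$ specialize \emph{exactly} to $-\alpha\beta q$ and $\alpha+q^n+\beta q$ with no stray factors or offsets; this is ultimately a consequence of the clean geometric-progression identity $\lambda_n/\lambda_{n-1}=q$, together with the cancellation $-\alpha\beta+\beta\alpha=0$ that arises from the constant choice $w_n\equiv -\beta$.
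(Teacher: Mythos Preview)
Your proposal is correct and follows essentially the same route as the paper. The paper explicitly states that this corollary is obtained via the Bauer--Muir transformation, and the general result is recorded as \eqref{BMcf1} in Corollary~\ref{cBM}; your direct Bauer--Muir computation with the constant modifier $w_n\equiv-\beta$ is precisely the specialization of \eqref{BMcf1} to $q_n=0$, $p_n=q^n$ (your version is in fact cleaner, since the ratio $\lambda_n/\lambda_{n-1}=q$ lets you bypass the equivalence transformation that would otherwise be needed to reduce \eqref{BMcf1} to the form in \eqref{BMRcf3}), and the identification of the limit with $h(\infty)$ via Theorem~\ref{tmain} and \eqref{hinfin} is exactly what is intended.
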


\section{Poincar{\'e} type recurrences}\label{B}
Let the sequence $\{x_{n}\}_{n \geq 0}$ have the initial values
$x_{0}$, $\dots$, $x_{p-1}$ and be subsequently defined by
\begin{equation}\label{pereq}
x_{n+p}=\sum_{r=0}^{p-1}a_{n,r}x_{n+r},
\end{equation}
for $n \geq 0$. Suppose also that there are numbers $a_{0},\dots, a_{p-1}$ such that
\begin{align}
\label{asumineq}
&\lim_{n \to \infty}a_{n,r}=a_{r},& &0 \leq r \leq p-1.&
\end{align}

A recurrence of the form (\ref{pereq}) satisfying the condition (\ref{asumineq}) is
called a Poincar{\'e}-type recurrence, (\ref{asumineq}) being known as the
Poincar{\'e} condition. Such recurrences were initially studied by Poincar{\'e}
who proved that if the roots of the characteristic equation
\begin{equation}\label{chareq}
t^{p}-a_{p-1}t^{p-1}-a_{p-2}t^{p-2}- \dots -a_{0}=0
\end{equation}
have distinct norms, then the ratios of consecutive terms in the
recurrence (for any set of initial conditions) tend to one of the
roots.  See \cite{Poi}.  Because the roots are also the
eigenvalues of the associated companion matrix, they  are also
referred to as the eigenvalues of (\ref{pereq}).  This result was
improved by O. Perron, who obtained a number of theorems about the
limiting asymptotics of such recurrence sequences. Perron
\cite{Perron1} made a significant advance in 1921 when he proved
the following theorem which for the first time treated cases of
eigenvalues which repeat or are of equal norm.

\begin{thm}\label{tp1}
Let the sequence $\{x_{n}\}_{n \geq 0}$ be defined by initial values
$x_{0}$, $\dots$, $x_{p-1}$ and by (\ref{pereq})
for $n \geq 0$. Suppose also that there are numbers $a_{0},\dots, a_{p-1}$ satisfying (\ref{asumineq}).
Let $q_{1},\,q_{2}, \dots q_{\sigma}$ be the distinct moduli of the roots of the characteristic
equation (\ref{chareq})
and let $l_{\lambda}$ be the number of roots whose modulus is $q_{\lambda}$, multiple roots counted
according to multiplicity, so that
\[
l_{1}+l_{2}+\dots l_{\sigma}=p.
\]
Then, provided $a_{n,0}$ be different from zero for $n \geq 0$, the difference equation
\eqref{pereq} has a fundamental system of solutions,
which fall into $\sigma$ classes, such that, for the solutions of the $\lambda$-th class
and their linear combinations,
\[
\limsup_{n \to \infty}  \sqrt[n]{|x_{n}|}=q_{\lambda}.
\]
The number of solutions of the $\lambda$-th class is $l_{\lambda}$.
\end{thm}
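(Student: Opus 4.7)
\medbreak
\noindent
\textbf{Proof plan.}
The plan is to recast \eqref{pereq} as a matrix product and analyze it via a block decomposition of the limiting companion matrix by eigenvalue modulus. Setting $X_n:=(x_n,x_{n+1},\dots,x_{n+p-1})^T$, one has $X_{n+1}=A_nX_n$, where $A_n$ is the companion matrix whose last row is $(a_{n,0},\dots,a_{n,p-1})$. Since $\det A_n = \pm a_{n,0}$, the hypothesis $a_{n,0}\neq 0$ makes each $A_n$ invertible, and \eqref{asumineq} gives $A_n\to A$, the companion matrix of \eqref{chareq}. A fundamental system of $p$ solutions to \eqref{pereq} corresponds to a basis of $\mathbb{C}^p$ of initial vectors $X_0$, so the problem reduces to partitioning such a basis into $\sigma$ classes of sizes $l_1,\dots,l_\sigma$ with the stated growth property.

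First I would conjugate to block-diagonalize $A$ by modulus: pick $P\in GL_p(\mathbb{C})$ so that $P^{-1}AP=\mathrm{diag}(B_1,\dots,B_\sigma)$, where $B_\lambda$ is $l_\lambda\times l_\lambda$ with all eigenvalues of modulus exactly $q_\lambda$ (group Jordan blocks by modulus). Setting $Y_n:=P^{-1}X_n$ transforms the recurrence into $Y_{n+1}=\tilde B_nY_n$ with $\tilde B_n:=P^{-1}A_nP\to B:=\mathrm{diag}(B_1,\dots,B_\sigma)$. It then suffices to construct, for each $\lambda$, an $l_\lambda$-dimensional subspace $V_\lambda\subset\mathbb{C}^p$ such that every nonzero $Y_0\in V_\lambda$ satisfies $\limsup_n\|Y_n\|^{1/n}=q_\lambda$, and to verify $\mathbb{C}^p=\bigoplus_\lambda V_\lambda$.

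The main obstacle is this subspace construction, which is the classical Perron separation step. The idea is to exploit the strict modulus gaps $q_\lambda\neq q_{\lambda'}$ as a hyperbolic splitting for the sequence $\{\tilde B_n\}$. For the smallest modulus $q_\sigma$ one produces $V_\sigma$ by running the recurrence \emph{backward} via $\tilde B_n^{-1}$: backward iteration turns the $q_\sigma$-block into the strongly contracting direction, so a fixed-point or invariant-cone construction on the Grassmannian of $l_\sigma$-planes yields a limit subspace $V_\sigma$ on which the forward dynamics has growth $q_\sigma^n$. Iterating this argument on the quotient $\mathbb{C}^p/V_\sigma$ (or dually, constructing the largest-modulus subspace first by forward iteration) produces $V_\lambda$ for each $\lambda$. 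The strict modulus gap supplies the contraction ratio for the fixed point, so one does not need absolute summability of $\tilde B_n-B$---the bare Poincar{\'e} condition suffices, although the convergence rate of the limiting projection onto $V_\lambda$ is only controlled qualitatively.

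Once the $V_\lambda$ are in hand, the distinct exponential growth rates force their sum to be direct, and $\sum l_\lambda=p$ then gives $\bigoplus V_\lambda=\mathbb{C}^p$. The upper bound $\limsup\|Y_n\|^{1/n}\leq q_\lambda$ for $Y_0\in V_\lambda$ is built into the construction; the matching lower bound for any nonzero linear combination within $V_\lambda$ follows because strict inequality would place the combination in $\bigoplus_{\mu:q_\mu<q_\lambda}V_\mu$, contradicting independence. Translating back via $X_n=PY_n$ preserves growth rates and yields the fundamental system of \eqref{pereq} split into $\sigma$ classes of sizes $l_\lambda$ with $\limsup_n|x_n|^{1/n}=q_\lambda$ on each.
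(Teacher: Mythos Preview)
The paper does not actually prove this theorem: it is stated as Perron's 1921 result, with the citation \cite{Perron1}, purely as historical background before the authors' own Theorem~\ref{t2}. So there is no ``paper's own proof'' to compare against; you are sketching a proof of a classical theorem that the survey merely quotes.

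As for the sketch itself, the overall architecture---companion-matrix reformulation, conjugation to a block-diagonal limit grouped by eigenvalue modulus, and construction of invariant subspaces $V_\lambda$ via the modulus gaps---is a legitimate modern route to Perron's theorem, closer in spirit to nonautonomous hyperbolic dynamics than to Perron's original analysis. Two places deserve more than a gesture, however. First, the ``fixed-point or invariant-cone construction on the Grassmannian'' is the entire content of the theorem; invoking it by name is not a proof. You would need to set up the graph transform (or a cone-contraction) explicitly, verify that the perturbation $\tilde B_n-B\to 0$ is enough to get eventual contraction without any summability, and handle the case $q_\sigma=0$ (where backward iteration via $\tilde B_n^{-1}$ is unavailable in the limit, even though each $\tilde B_n$ is invertible). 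Second, your lower-bound argument is circular as written: saying ``strict inequality would place the combination in $\bigoplus_{\mu:q_\mu<q_\lambda}V_\mu$'' presumes you already know that every solution with growth rate below $q_\lambda$ lies in that sum, which is exactly what has not yet been established. The standard fix is to build the $V_\lambda$ so that the forward orbit of any nonzero $Y_0\in V_\lambda$ shadows a nonzero orbit of the block $B_\lambda$, giving the lower bound directly from the spectral radius of $B_\lambda$.
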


Thus when all of the characteristic roots have norm $1$, this theorem
gives that
\[
\limsup_{n  \to \infty}  \sqrt[n]{|x_{n}|}= 1.
\]

Another related paper is \cite{KL} where the authors study products
of matrices and give a sufficient condition for their boundedness. This
is then used to study ``equimodular'' limit periodic continued fractions, which
are limit periodic continued fractions in which the characteristic roots of the associated
$2 \times 2$ matrices are all equal in modulus. The matrix theorem in \cite{KL} can
also be used to obtain results about the boundedness of recurrence sequences.
We study a more specialized situation
here and obtain far more detailed information as a consequence.

Our focus is on the case where the characteristic roots
are distinct numbers on the unit circle. Under a condition stronger
than (\ref{asumineq}) we have a theorem showing that all non-trivial
solutions of such recurrences approach
a limit set in a precisely controlled way. Specifically, our theorem
is:

\begin{thm}\label{t2}
Let the sequence $\{x_{n}\}_{n \geq 0}$ be defined by initial values
$x_{0}$, $\dots$, $x_{p-1}$ and by
\begin{equation}\label{pereqa}
x_{n+p}=\sum_{r=0}^{p-1}a_{n,r}x_{n+r},
\end{equation}
for $n \geq 0$. Suppose also that there are numbers $a_{0},\dots,
a_{p-1}$ such that
\begin{align*}
&\sum_{n=0}^{\infty}|a_{r}-a_{n,r}|<\infty,& &0 \leq r \leq p-1.&
\end{align*}
Suppose further that  the roots of the characteristic equation
\begin{equation}\label{chareqa}
t^{p}-a_{p-1}t^{p-1}-a_{p-2}t^{p-2}- \dots -a_{0}=0
\end{equation}
 are distinct and all on the unit circle, with values, say,
$\alpha_{0}$, $\dots$, $\alpha_{p-1}$. Then there exist complex
numbers $c_0, \dots , c_{p-1}$ such that
\begin{equation}\label{xrecur}
 x_{n} \sim \sum_{i=0}^{p-1}c_{i}\alpha_{i}^{n}.
\end{equation}
\end{thm}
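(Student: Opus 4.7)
The plan is to recast the scalar recurrence as an infinite matrix product and then apply Theorem \ref{DM}. Let $M$ be the $p\times p$ companion matrix of \eqref{chareqa} (with $1$'s on the super-diagonal and last row $(a_0, a_1, \ldots, a_{p-1})$), and let $D_n$ be the matrix obtained from $M$ by replacing $a_r$ with $a_{n,r}$. Setting $\mathbf{x}_n := (x_n, x_{n+1}, \ldots, x_{n+p-1})^T$, the recurrence \eqref{pereqa} is equivalent to $\mathbf{x}_{n+1} = D_n \mathbf{x}_n$, so that $\mathbf{x}_n = D_{n-1} D_{n-2} \cdots D_0 \mathbf{x}_0$.

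Next I would verify the hypotheses of Theorem \ref{DM} with the constant sequence $M_i := M$. Because the eigenvalues $\alpha_0,\ldots,\alpha_{p-1}$ of $M$ are distinct, $M$ is diagonalizable: $M = P\Lambda P^{-1}$ with $\Lambda = \mathrm{diag}(\alpha_0,\ldots,\alpha_{p-1})$. Since every $|\alpha_i| = 1$, one has $\|\Lambda^{\pm n}\| \le 1$ and hence $\|M^{\pm n}\| \le \|P\|\,\|P^{-1}\|$ uniformly in $n$, which gives \eqref{cond1}; moreover $|\det M| = \prod_i |\alpha_i| = 1$, so $M$ is invertible. The matrix $D_n - M$ vanishes outside its last row, whose entries are $a_{n,r} - a_r$, and therefore
\[
\sum_{n} \|D_n - M\| \;\le\; \sum_{r=0}^{p-1}\sum_{n} |a_{n,r} - a_r| \;<\; \infty,
\]
which is \eqref{cond2}.

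Theorem \ref{DM} then produces a matrix $F$ such that, after aligning product conventions (e.g.\ by applying the theorem to the transposed sequences $D_{i-1}^T$ and $M^T$, then transposing back), the relation
\[
D_{n-1} D_{n-2} \cdots D_0 \;\sim\; M^n F
\]
holds in $M_p(\mathbb{C})$. Right-multiplication by the fixed vector $\mathbf{x}_0$ preserves $\sim$, so $\mathbf{x}_n \sim M^n F \mathbf{x}_0 = P\Lambda^n P^{-1} F \mathbf{x}_0$. Extracting the first coordinate via $e_1^T = (1, 0, \ldots, 0)$, and setting $(u_0, u_1, \ldots, u_{p-1}) := e_1^T P$ and $(v_0, v_1, \ldots, v_{p-1})^T := P^{-1} F \mathbf{x}_0$, gives
\[
x_n \;=\; e_1^T \mathbf{x}_n \;\sim\; \sum_{i=0}^{p-1} u_i v_i\, \alpha_i^{\,n},
\]
which is \eqref{xrecur} with $c_i := u_i v_i$.

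The substance of the proof is packaged inside Theorem \ref{DM}, so the main obstacle is essentially bookkeeping: identifying the correct companion-matrix formulation, checking that the unit-modulus and distinctness hypotheses yield the required uniform bound on $\|M^{\pm n}\|$ (both halves of \eqref{cond1}), and reconciling the left-to-right product convention in Theorem \ref{DM} with the natural order $D_{n-1}\cdots D_0$ produced by iterating the vector recurrence. Once this is done, the asymptotic for $x_n$ falls out by inspecting a single entry of the asymptotic matrix identity.
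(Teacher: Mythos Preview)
Your proposal is correct and follows precisely the approach the paper intends: the survey omits the proof but presents Theorem~\ref{t2} as an application of Theorem~\ref{DM}, and the companion-matrix reduction you describe is exactly how one passes from the scalar recurrence to the matrix-product setting (compare also Proposition~\ref{tm}, the roots-of-unity precursor). Your handling of the bookkeeping---diagonalizability giving the uniform bound in \eqref{cond1}, the last-row perturbation giving \eqref{cond2}, and the transpose trick to reconcile product orders---is all sound.
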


The following corollary, also proved in \cite{BMcL05}, is immediate.

\begin{cor}
Let the sequence $\{x_{n}\}_{n \geq 0}$ be defined by initial values
$x_{0}$, $\dots$, $x_{p-1}$
 and by
\begin{equation*}
x_{n+p}=\sum_{r=0}^{p-1}a_{n,r}x_{n+r},
\end{equation*}
for $n \geq 0$.  Suppose also that there are numbers $a_{0},\dots,
a_{p-1}$ such that
\begin{align*}
&\sum_{n=0}^{\infty}|a_{r}-a_{n,r}|<\infty,& &0 \leq r \leq p-1.&
\end{align*}
Suppose further that  the roots of the characteristic equation
\begin{equation*}
t^{p}-a_{p-1}t^{p-1}-a_{p-2}t^{p-2}- \dots -a_{0}=0
\end{equation*}
are distinct roots of unity, say $\alpha_{0}$, $\dots$,
$\alpha_{p-1}$. Let $m$ be the least positive integer such that, for
all $j \in \{0,1,\dots,p-1\}$, $\alpha_{j}^{m} = 1$.
 Then, for $0 \leq j \leq m-1$, the subsequence
$\{x_{mn+j}\}_{n=0}^{\infty}$ converges. Set $l_j=\lim_{n\to\infty}
x_{nm+j}$, for integers $j\ge 0$. Then the (periodic) sequence
$\{l_j\}$ satisfies the recurrence relation
\begin{equation*}
l_{n+p}=\sum_{r=0}^{p-1}a_{r}l_{n+r},
\end{equation*}
and thus there exist constants $c_0,\cdots,c_{p-1}$ such that
\begin{equation*}
l_n=\sum_{i=0}^{p-1} c_i\alpha_i^n.
\end{equation*}
\end{cor}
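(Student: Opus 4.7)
The plan is to derive the corollary directly from Theorem \ref{t2} by exploiting the extra structure that comes from all characteristic roots being roots of unity. The hypotheses here are exactly those of Theorem \ref{t2} (distinct characteristic roots on $\mathbb{T}$, plus the absolute summability condition on $a_{n,r}-a_r$), so that theorem immediately yields complex constants $c_0,\dots,c_{p-1}$ with
\[
x_n \sim \sum_{i=0}^{p-1} c_i \alpha_i^n.
\]
This is the main engine; everything else is bookkeeping.

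Next I would exploit the periodicity built into the $\alpha_i$. Since $\alpha_i^m=1$ for every $i$, for any fixed $j\in\{0,1,\dots,m-1\}$ and every $n\ge 0$,
\[
\sum_{i=0}^{p-1} c_i \alpha_i^{mn+j} \;=\; \sum_{i=0}^{p-1} c_i \alpha_i^j,
\]
which is independent of $n$. Define $l_j:=\sum_{i=0}^{p-1} c_i \alpha_i^j$ and extend $\{l_j\}$ to all $j\in\mathbb{Z}_{\ge 0}$ by the same formula (which is automatically $m$-periodic in $j$). Then the asymptotic relation specializes along the arithmetic progression $mn+j$ to give $x_{mn+j}\to l_j$ as $n\to\infty$, establishing convergence of each of the $m$ subsequences.

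It remains to verify that $\{l_n\}$ satisfies the limiting linear recurrence. Because each $\alpha_i$ is a root of the characteristic polynomial \eqref{chareqa}, $\alpha_i^p=\sum_{r=0}^{p-1}a_r\alpha_i^r$; multiplying by $\alpha_i^n$, weighting by $c_i$, and summing over $i$ gives
\[
l_{n+p} \;=\; \sum_{i=0}^{p-1} c_i \alpha_i^{n+p}
\;=\; \sum_{r=0}^{p-1} a_r \sum_{i=0}^{p-1} c_i \alpha_i^{n+r}
\;=\; \sum_{r=0}^{p-1} a_r \, l_{n+r},
\]
as required. The explicit representation $l_n=\sum_{i=0}^{p-1} c_i \alpha_i^n$ is then just the definition of $l_n$.

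There is no real obstacle to overcome: all the analytic work (handling the perturbation $a_{n,r}-a_r$, producing the constants $c_i$, and establishing $\sim$) has already been done in Theorem \ref{t2}. The only mildly subtle point is recognizing that \emph{asymptotic} equality $x_n\sim\sum c_i\alpha_i^n$ upgrades to \emph{convergence} on each arithmetic progression precisely because the model sequence $\sum c_i\alpha_i^n$ is itself $m$-periodic, so equality up to a null sequence forces the subsequence limits to exist and to equal the constant values $l_j$.
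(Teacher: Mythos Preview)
Your proposal is correct and matches the paper's approach exactly: the paper states that the corollary is ``immediate'' from Theorem~\ref{t2}, and you have written out precisely the details that make it so. The only thing the paper adds is a citation noting the result also appears in \cite{BMcL05}.
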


\section{Applications to $(r,s)$-matrix continued fractions}

In \cite{LB}, the authors define a generalization of continued
fractions called $(r,s)$-matrix continued fractions. This
generalization unifies a number of generalizations of continued
fractions including ``generalized (vector valued) continued
fractions" and ``G-continued fractions", see \cite{LW92} for
terminology.

Here we show that our results apply to limit periodic
$(r,s)$-matrix continued fractions with eigenvalues of equal
magnitude, giving estimates for the asymptotics of their
approximants so that their limit sets can be determined.

For consistency we closely follow the notation used in \cite{LB} to define $(r,s)$-matrix
continued fractions. Let $M_{s,r}(\mathbb{C})$ denote the set of $s\times r$ matrices
over the complex numbers.  Let $\theta_k$ be a sequence of $n\times n$ matrices over
$\mathbb{C}$. Assume that $r+s=n$.  A $(r,s)$-matrix continued fraction is associated
with a recurrence system of the form $Y_k=Y_{k-1}\theta_k$. The continued
fraction is defined by its sequence of approximants. These are sequences of $s\times r$
matrices defined in the following manner.

Define the function $f: D\in M_n(\mathbb{C})\to
M_{s,r}(\mathbb{C})$ by
\begin{equation}\label{fmateq} f(D)=
B^{-1}A,
\end{equation}
where $B$ is the $s\times s$ submatrix
consisting of the last $s$ elements from both the rows and columns
of $D$, and $A$ is the $s\times r$ submatrix consisting of the
first $r$ elements from the last $s$ rows of $D$.

Then the $k$-th approximant of the $(r,s)$-matrix continued
fraction associated with the sequence $\theta_k$ is defined to be
\begin{equation}\label{skeq}
s_k:=f(\theta_k\theta_{k-1}\cdots\theta_2\theta_1).
\end{equation}

To apply Theorem \ref{DM} to this situation, we endow
$M_{s\times r}(\mathbb{C})$ with a metric by letting the distance
function for two such matrices be the maximum absolute value of
the respective differences of corresponding pairs of elements.
Then, providing that the  $f$ is continuous, (a suitable specialization of)
our theorem can be
applied. (Note that $f$ will be continuous providing that it
exists, since the inverse function of a matrix is continuous when
it exists.)

Let $\lim_{k\to\infty}\theta_k=\theta$, for some $\theta\in M_n(\mathbb{C})$. Then the
recurrence system is said to be of Poincar{\'e} type and the
$(r,s)$-matrix continued fraction is said to be limit periodic. Under our usual condition,
Theorem \ref{DM} can be applied and the following theorem results.

\begin{thm}
Suppose that the condition $\sum_{k\ge
1}||\theta_k-\theta||<\infty$ holds, that the matrix $\theta$ is
diagonalizable, and that the eigenvalues of $\theta$ are all of
magnitude $1$. Then the $k$th approximant $s_k$ has the asymptotic
formula
\begin{equation}\label{sksim}
s_k\sim f(\theta^k F),
\end{equation}
where $F$ is the matrix defined by the convergent product
\[
F:=\lim_{k\to\infty}\theta^{-k}\theta_k\theta_{k-1}\cdots\theta_2\theta_1.
\]
\end{thm}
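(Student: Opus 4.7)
The plan is to apply Theorem \ref{DM} with the constant sequence $M_k := \theta$ and $D_k := \theta_k$, taking products in the right-to-left order that matches the definition $s_k = f(\theta_k\theta_{k-1}\cdots\theta_1)$. First I would verify hypotheses \eqref{cond1} and \eqref{cond2}. The summability $\sum_k \|\theta_k - \theta\| < \infty$ is exactly \eqref{cond2}. For \eqref{cond1}, since $\theta$ is diagonalizable with all eigenvalues of modulus $1$, I write $\theta = P\Lambda P^{-1}$ with $\Lambda$ diagonal and $|\Lambda_{ii}| = 1$; then $\theta^{\pm k} = P\Lambda^{\pm k}P^{-1}$, and because $\Lambda^{\pm k}$ has norm $1$ for every $k$, both $\|\theta^k\|$ and $\|\theta^{-k}\|$ are uniformly bounded by a constant multiple of $\|P\|\,\|P^{-1}\|$.

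Second, because Theorem \ref{DM} is written for left-to-right products, I would transfer it to the right-to-left setting via the transpose substitution $\tilde D_i := D_i^T$, $\tilde M_i := M_i^T$ (the same device that justifies the two product conventions in Proposition \ref{tm}). Applied with $M_k = \theta$, this yields the existence of
\[
F := \lim_{k\to\infty} \theta^{-k}\,\theta_k\theta_{k-1}\cdots\theta_1
\]
and, as an asymptotic of matrix sequences in $M_n(\mathbb{C})$,
\[
\theta_k\theta_{k-1}\cdots\theta_1 \;\sim\; \theta^k F,
\]
which is the right-to-left analogue of \eqref{asym}.

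Third, I would push this asymptotic through $f$ by invoking the continuous-function clause of Theorem \ref{DM}. For some integer $h \ge 1$ the set
\[
\overline{\{\theta^k F : k \ge h\}} \;\cup\; \bigcup_{k \ge h}\{\theta_k\theta_{k-1}\cdots\theta_1\}
\]
is compact in $M_n(\mathbb{C})$, and the map $f(D) = B^{-1}A$ is continuous at every point of this set whose bottom-right $s\times s$ block $B$ is non-singular, since matrix inversion and matrix multiplication are continuous on their natural domains. Granting this, the continuous-function extension in Theorem \ref{DM} delivers $s_k = f(\theta_k\cdots\theta_1) \sim f(\theta^k F)$, which is \eqref{sksim}.

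The main obstacle is precisely the invertibility that makes $f$ continuous on the compact set produced by Theorem \ref{DM}: one needs to know that for some $h$ the bottom-right $s\times s$ block of $\theta^k F$ is non-singular for all $k \ge h$, whence (via the asymptotic \eqref{asym}) the corresponding block of $\theta_k\cdots\theta_1$ is non-singular for all large $k$ as well. As the discussion immediately preceding the theorem indicates, this is folded into the standing convention that $f$ is treated as continuous wherever it exists; away from this condition, the conclusion should be read on the cofinal subsequence of $k$ for which $s_k$ is defined.
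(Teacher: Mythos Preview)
Your proposal is correct and follows essentially the same approach as the paper: the paper's proof amounts to the remark that Theorem \ref{DM} applies (with $M_k=\theta$, $D_k=\theta_k$), that $f$ is continuous wherever it exists, and that the product order is reversed. You have merely made explicit the verification of \eqref{cond1} via diagonalization, the transpose trick for reversing the product convention, and the caveat about the domain of $f$---all of which the paper leaves implicit or relegates to a one-line parenthetical.
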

Note that because of the way that $(r,s)$-matrix continued fractions are defined,
we have taken products in the reverse order than the rest of the paper.

As a consequence of this asymptotic, the limit set can be determined from
\[
l.s.(s_k)=l.s.(f(\theta^k F)).
\]

\section{Conclusion}\label{C}

Because of length restrictions, we have omitted several
corollaries as well as most proofs. Interested readers should consult the author's papers
\cite{BMcL05} and \cite{BMcL06}.

\end{document}